
\documentclass[10pt]{amsart}

\usepackage{verbatim}
\usepackage{eucal,url,amssymb,stmaryrd,
enumerate,amscd,
}

\usepackage{amsfonts}
\usepackage{amsmath,amsthm,amssymb,amscd,enumerate,eucal,url,stmaryrd}

\setlength{\marginparwidth}{.8in} \setlength{\textheight}{7.8in}
\setlength{\oddsidemargin}{0.35in}
\setlength{\evensidemargin}{0.35in} \setlength{\textwidth}{5.9in}
\setlength{\topmargin}{0.18in} \setlength{\headheight}{0.18in}
\setlength{\marginparwidth}{1.0in}

\numberwithin{equation}{section}

\newtheorem{thrm}{Theorem}[section]

\newtheorem{lemma}[thrm]{Lemma}

\newtheorem{prop}[thrm]{Proposition}

\newtheorem{cor}[thrm]{Corollary}

\newtheorem{dfn}[thrm]{Definition}

\newtheorem{rmrk}[thrm]{Remark}

\setlength{\oddsidemargin}{0.3in}

\setlength{\evensidemargin}{0.3in}

\setlength{\textwidth}{6.4in}

\setlength{\rightmargin}{0.7in}

\setlength{\leftmargin}{-0.5in}

\setlength{\textheight}{8.7in}


\def\frg{{\frak g}}
\def\frh{{\frak h}}

\def\frn{{\frak n}}
\def\frs{{\frak s}}

\def\fru{{\frak u}}
\def\gc{\frg_\mathbb{C}}
\def\Re{{\frak R}{\frak e}\,}
\def\Im{{\frak I}{\frak m}\,}

\def\zzz{{\!\!\!}}

\begin{document}

\begin{abstract}
We classify non-nilpotent complex structures on 6-nilmanifolds and
their associated invariant balanced metrics. As an application we
find a large family of solutions of the heterotic supersymmetry
equations with non-zero flux, non-flat instanton and constant
dilaton satisfying the anomaly cancellation condition with respect
to the Chern connection.
\end{abstract}

\title[Non-nilpotent complex geometry of nilmanifolds and heterotic
supersymmetry] {Non-nilpotent complex geometry of nilmanifolds\\ and
heterotic supersymmetry
}

\author{Luis Ugarte}
\address[L. Ugarte]{Departamento de Matem\'aticas\,-\,I.U.M.A.,
Universidad de Zaragoza,
Campus Plaza San Francisco,
50009 Zaragoza, Spain} 
\email{ugarte@unizar.es}

\author{Raquel Villacampa}
\address[R. Villacampa]{Centro Universitario de la Defensa\,-\,I.U.M.A., Academia General
Mili\-tar, Crta. de Huesca s/n. 50090 Zaragoza, Spain}
\email{raquelvg@unizar.es}

\maketitle

\section{Introduction}

\noindent The complex geometry of nilmanifolds provides a rich
source of explicit examples of compact complex manifolds admitting
additional special structures with interesting properties. Many
authors have studied several aspects of this geometry from different
points of view (see for example~\cite{CFGU2,FIUV,FG,FPS,R,S,U} and
the references therein). Here we consider the case when the
invariant complex structure is not nilpotent, in the sense of
\cite{CFGU2}, and the nilmanifold has dimension six. This is the
lowest dimension when these structures appear. In
Section~\ref{complex-sect} we prove that there exist only 4
non-nilpotent complex structures $J_{\epsilon}^{\pm}$ $(\epsilon=0,1)$
up to equivalence. According to \cite{S,U} there are, up to
isomorphism, only two 6-nilmanifolds admitting non-nilpotent complex
structures, although only one of them admits special Hermitian
metrics. In fact, we show that if $\epsilon=0$ then the Lie algebra
underlying the nilmanifold is isomorphic to $(0,0,0,12,23,14-35)$,
whereas if $\epsilon=1$ then the corresponding Lie algebra is
$(0,0,12,13,23,14+25)$.

Along this paper $N$ denotes a nilmanifold with
$(0,0,0,12,23,14-35)$ as underlying Lie algebra. It turns out that
any invariant complex structure on the manifold $N$ is equivalent to
$J_0^+$ or $J_0^-$, and admits Hermitian structures which are
balanced in the sense of \cite{Mi}. Our goal in this paper is the
study of the balanced Hermitian geometry of $N$ and its application
to the heterotic supersymmetry theory with fluxes.

Strominger investigated in \cite{Str} the heterotic superstring
background with spacetime supersymmetry and proposed a model based
on Hermitian 6-manifolds with holomorphically trivial canonical
bundle which are Calabi-Yau with torsion, i.e. the holonomy of the
Bismut \cite{Bis} connection $\nabla^+$ is reduced to SU(3). More
concretely, as it is described in Section~\ref{het-constant-dil},
the Strominger system in heterotic string theory consists of the
gravitino, dilatino and gaugino equations, together with the anomaly
cancellation condition. Given an SU(3)-structure $(J,F,\Psi)$ on $M$,
where $F$ denotes the fundamental form,
the gravitino and dilatino equations are both satisfied if and
only if the holonomy of $\nabla^+$ reduces to SU(3), the structure
$J$ is integrable and the Lee form $\theta=2d\phi$, $\phi$ being the
dilaton function (see \cite{FI} for general necessary conditions to
solve the gravitino equation). The vanishing of the gaugino
variation requires a 2-form $\Omega^A\not=0$ of instanton type
\cite{Str}, i.e. a Donaldson-Uhlenbeck-Yau SU($3$)-instanton $A$
with non-zero curvature $\Omega^A$. Finally, the anomaly
cancellation condition in the Strominger system reads as
$$dT=\frac{\alpha'}{4}\left({\rm
tr}\, \Omega\wedge\Omega -{\rm tr}\, \Omega^A\wedge \Omega^A
\right),\quad\quad \alpha'>0,$$ where $T=JdF$ is the torsion of
$\nabla^+$. Here $\Omega$ is the curvature of some metric connection
and there are several proposals for it. In \cite{y4,y3,Str} the
curvature of the Chern connection $\nabla^c$ is proposed to
investigate the anomaly cancellation and, based on a construction in
\cite{GP}, Fu and Yau first proved the existence of solutions of the
Strominger system on a Hermitian non-K\"ahler manifold given as a
$\mathbb{T}^2$-bundle over a $K3$ surface \cite{y3}. In contrast,
torus bundles over a complex torus cannot solve the system with
respect to the Chern connection in the anomaly cancellation
condition~\cite{y4}.

The solutions given in \cite{y3} have non-constant dilaton function.
If one requires the dilaton to be constant then the Hermitian
structure must be balanced. An interesting fact is that for
nilmanifolds with invariant Hermitian structure the holonomy of
$\nabla^+$ reduces to SU(3) if and only if the structure is balanced
\cite{FPS}, that is to say, the gravitino equation is equivalent to
the dilatino equation with constant dilaton. The classification of
6-nilmanifolds admitting invariant balanced Hermitian structure is
given in \cite{U}, and it follows from \cite[Theorem~4.4]{R} that
the corresponding complex nilmanifold is always a torus bundle over
a complex 2-torus, except for the nilmanifold $N$. Therefore,
according to the result of \cite{y4} mentioned above, in order to
find a 6-nilmanifold solving the heterotic supersymmetry equations
with non-zero flux $T$, constant dilaton and with respect to
$\nabla^c$ in the anomaly cancellation, we are necessarily led to
investigate the non-nilpotent complex geometry of $N$ and their
associated invariant balanced metrics.

In Section~\ref{soluciones} we find many explicit solutions of the
Strominger system; actually, we prove that for any non-nilpotent
complex structure $J$ on $N$, the compact complex manifold $(N,J)$
admits a family of solutions of the heterotic supersymmetry
equations with non-zero flux, non-flat instanton $A$, i.e.
$\Omega^A\not=0$, and constant dilaton satisfying the anomaly
cancellation condition with respect to the Chern connection. Some of
these solutions are a deformation of the particular solution given
in \cite{FIUV} for which the instanton is flat (see
Remark~\ref{deformation} for details).

The paper is structured as follows. In Section~\ref{complex-sect} we
first prove that any non-nilpotent complex structure $J$ in six
dimensions is equivalent to $J_{\epsilon}^{\pm}$ $(\epsilon=0,1)$, and
it has balanced Hermitian metrics if and only if $\epsilon=0$, i.e.
$J$ lives on the nilmanifold $N$. Moreover, in
Theorem~\ref{families-balanced-h19} we describe the space of
invariant balanced $J$-Hermitian structures on $N$: any such a
structure is isomorphic to $(J_0^{\pm},F)$, where $F$ belongs to one of
two families to which we refer as Family I and Family II. After
recalling the heterotic supersymmetry system with constant dilaton,
we find in Section~\ref{het-constant-dil} an adapted basis for each
balanced Hermitian structure in the Families I and II. These adapted
bases are used in Section~\ref{soluciones} to explicitly construct a
3-parametric family of instantons $A_{\lambda,\mu,\tau}$ for Family
I such that ${\rm tr} (\Omega^{A_{\lambda,\mu,\tau}}\wedge
\Omega^{A_{\lambda,\mu,\tau}})\not=0$ if and only if the parameter
$\tau$ is non-zero. In Theorem~\ref{solutions-Family-I} we find many
solutions in Family I of the heterotic supersymmetry equations with
non-zero flux, non-flat instanton and constant dilaton with respect
to the Chern connection in the anomaly cancellation condition. For
Family II we also get in Theorem~\ref{solutions-Family-II} many
solutions to the supersymmetry system, although in this case the
instantons we find become flat. Finally, in Section~\ref{explicit}
we show an explicit realization of the nilmanifold $N$ and of the
balanced Hermitian structures involved.

\section{Non-nilpotent complex structures on 6-nilmanifolds and compatible balanced
metrics}\label{complex-sect}

\noindent Let $M$ be a nilmanifold, i.e. a compact quotient of a
simply-connected nilpotent Lie group $G$ by a lattice $\Gamma$ of
maximal rank. Any left-invariant complex structure on $G$ descends
to $M$ in a natural way, so a source (possibly empty) of complex
structures on $M$ is given by the endomorphisms $J\colon\frg
\longrightarrow \frg$ of the Lie algebra $\frg$ of $G$ such that
$J^2=-{\rm Id}$ and satisfying the ``Nijenhuis condition''
$$
[JX,JY]=J[JX,Y]+J[X,JY]+[X,Y],
$$
for any $X,Y\in \frg$. We shall refer to any such an endomorphism as
a {\it complex structure} on the Lie algebra~$\frg$.

Associated to a complex structure $J$, there exists an ascending
series $\{\frg_l^J\}_{l\geq 0}$ on the Lie algebra, defined
inductively by
$$
\frg_0^J = \{0\} \ , \qquad  \frg_l^J = \{ X \in \frg \, \mid \, [X,
\frg] \subseteq \frg_{l-1}^J \ \ \mbox{\rm and} \ \ [JX, \frg]
\subseteq \frg_{l-1}^J \}\ , \quad l\geq 1.
$$
For any $l\geq 0$, the term $\frg_l^J$ is a $J$-invariant ideal of
$\frg$ which is contained in the term $\frg_l=\{X\in\frg\mid
[X,\frg]\subseteq \frg_{l-1}\}$ of the usual ascending central
series of $\frg$. But whereas $\frg_l$ always reach the whole Lie
algebra $\frg$ when $\frg$ is nilpotent, the series
$\{\frg_l^J\}_{l\geq 0}$ can stabilize in a proper $J$-ideal of
$\frg$. This motivates the following terminology: If $\frg_l^J=\frg$
for some $l$, then the complex structure $J$ is called {\it
nilpotent}~\cite{CFGU2}; otherwise we shall say that $J$ is {\it
non-nilpotent}.

\begin{dfn}\label{def-non-nilp}
{\rm A {\it nilpotent} (resp. {\it non-nilpotent}) complex structure
on a nilmanifold $M$ is a complex structure on $M$ coming from a
nilpotent (resp. non-nilpotent) complex structure $J$ on the
underlying Lie algebra $\frg$.}
\end{dfn}

Let us denote by $\gc$ the complexification of $\frg$ and by $\gc^*$
its dual. Given an endomorphism $J\colon \frg \longrightarrow \frg$
such that $J^2=-{\rm Id}$, we denote by $\frg^{1,0}$ and
$\frg^{0,1}$ the eigenspaces of the eigenvalues $\pm i$ of $J$ as an
endomorphism of~$\gc^*$, respectively. Then, the decomposition
$\gc^*=\frg^{1,0}\oplus\frg^{0,1}$ induces a natural bigraduation on
the complexified exterior algebra $\bigwedge^* \,\gc^* =\oplus_{p,q}
\bigwedge^{p,q}(\frg^*)=\oplus_{p,q} \bigwedge^p(\frg^{1,0})\otimes
\bigwedge^q(\frg^{0,1})$. If $d$ denotes the usual
Chevalley-Eilenberg differential of the Lie algebra, we shall also
denote by $d$ its extension to the complexified exterior algebra,
i.e. $d\colon \bigwedge^* \gc^* \longrightarrow \bigwedge^{*+1}
\gc^*$. It is well-known that the endomorphism $J$ is a complex
structure if and only if $d(\frg^{1,0})\subset
\bigwedge^{2,0}(\frg^*)\oplus \bigwedge^{1,1}(\frg^*)$. In the case
of nilpotent Lie algebras $\frg$, Salamon proves in~\cite{S} the
following equivalent condition for the endomorphism $J$ to be a
complex structure: $J$ is a complex structure on $\frg$ if and only
if $\frg^{1,0}$ has a basis $\{\omega^j\}_{j=1}^n$ such that
$d\omega^1=0$ and
$$
d \omega^{j} \in \mathcal{I} (\omega^1,\ldots,\omega^{j-1}), \quad
\mbox{ for } j=2,\ldots,n ,
$$
where $\mathcal{I} (\omega^1,\ldots,\omega^{j-1})$ is the ideal in
$\bigwedge\phantom{\!}^* \,\gc^*$ generated by
$\{\omega^1,\ldots,\omega^{j-1}\}$. From now on, we shall denote
$\omega^{j}\wedge\omega^k$ and $\omega^{j}\wedge\overline{\omega^k}$
simply by $\omega^{jk}$ and $\omega^{j\bar{k}}$, respectively.

In dimension six, the non-nilpotent complex structures are
characterized as follows:

\begin{prop}\label{J-red}\cite[Proposition~2~(a)]{U}
Let $J$ be a non-nilpotent complex structure on a nilpotent Lie
algebra $\frg$ of dimension 6. Then, there is a basis
$\{\omega^j\}_{j=1}^3$ for $\frg^{1,0}$ such that
\begin{equation}\label{nonnilpotent}
\left\{
\begin{array}{lcl}
d\omega^1 \zzz & = &\zzz 0,\\
d\omega^2 \zzz & = &\zzz E\, \omega^{13} + \omega^{1\bar{3}} \, ,\\
d\omega^3 \zzz & = &\zzz A\, \omega^{1\bar{1}} + ib\,
\omega^{1\bar{2}} - ib\bar{E}\, \omega^{2\bar{1}}  ,
\end{array}
\right.
\end{equation}
where $A,E\in \mathbb{C}$ with $|E|=1$, and $b\in \mathbb{R}-\{0\}$.
\end{prop}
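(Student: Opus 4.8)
The goal is to produce a normal form for a non-nilpotent complex structure $J$ on a 6-dimensional nilpotent Lie algebra $\frg$, so the plan is to start from the general structure equations dictated by the ascending $J$-series and successively normalize the coefficients by a clever change of the basis $\{\omega^1,\omega^2,\omega^3\}$ of $\frg^{1,0}$.

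\textbf{Step 1: Locate the $J$-series and fix the shape of the equations.} Since $J$ is non-nilpotent, the series $\{\frg_l^J\}$ stabilizes in a proper $J$-ideal; in dimension $6$ the only possibility (up to checking the small list of nilpotent Lie algebras, or by a direct dimension count using that each $\frg_l^J\subseteq\frg_l$ and that $\frg$ is nilpotent of step $\le 3$) is that $\frg_1^J$ has complex dimension $1$ and the series stops there. Choosing $\omega^1$ so that $\frg^{1,0}\cap\overline{\frg_1^J}$ is spanned by $\bar\omega^1$ forces $d\omega^1=0$ and, more importantly, forces $\bar\omega^1$ (equivalently $\omega^{\bar 1}$-type terms) to appear in the differentials of the remaining generators — this is precisely the failure of nilpotency. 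Using the integrability condition $d\frg^{1,0}\subset\bigwedge^{2,0}\oplus\bigwedge^{1,1}$ together with Salamon's staircase structure in the complementary direction, write the most general
\[
d\omega^2 = a_1\,\omega^{12}+a_2\,\omega^{13}+a_3\,\omega^{1\bar1}+a_4\,\omega^{1\bar2}+a_5\,\omega^{1\bar3}+\cdots,\qquad d\omega^3=(\text{terms in }\omega^1,\omega^2,\bar\omega^1,\ldots),
\]
keeping only the monomials permitted by $d^2=0$ and by the nesting $d\omega^j\in\mathcal I(\omega^1,\dots,\omega^{j-1})$ relative to the filtration that the $J$-series induces.

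\textbf{Step 2: Normalize $d\omega^2$.} Replacing $\omega^2$ by $\omega^2+c\,\omega^{1}$ type modifications and rescaling, kill all monomials in $d\omega^2$ except the $\omega^{13}$ and $\omega^{1\bar 3}$ terms; rescaling $\omega^3$ (and absorbing a phase) one arranges the $\omega^{1\bar 3}$-coefficient to be exactly $1$, leaving $d\omega^2=E\,\omega^{13}+\omega^{1\bar3}$. The Jacobi identity $d^2\omega^2=0$ then forces a constraint on $E$: computing $d(E\omega^{13}+\omega^{1\bar3})$ and using $d\omega^1=0$ and the (partially normalized) $d\omega^3$, one extracts $|E|=1$.

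\textbf{Step 3: Normalize $d\omega^3$ and extract the final constraints.} With $\omega^1,\omega^2$ now fixed, the residual freedom is $\omega^3\mapsto \lambda\omega^3+(\text{lower})$; use it to clear the $\omega^{12},\omega^{13},\omega^{2\bar2},\omega^{2\bar3},\omega^{3\bar{1}},\dots$ monomials from $d\omega^3$, reducing it to $d\omega^3=A\,\omega^{1\bar1}+p\,\omega^{1\bar2}+q\,\omega^{2\bar1}$. Imposing $d^2\omega^3=0$ and matching coefficients against the known $d\omega^1,d\omega^2$ yields the relations $p=ib$, $q=-ib\bar E$ with $b\in\mathbb R$, and $b\ne 0$ because $b=0$ would make the series $\{\frg_l^J\}$ reach all of $\frg$, contradicting non-nilpotency (indeed $b=0$ puts $\omega^2\in$ the next term of the $J$-series). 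This gives exactly \eqref{nonnilpotent}.

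\textbf{Main obstacle.} The delicate point is Step~2–3 bookkeeping: one must carry the change-of-basis ambiguity through simultaneously for $\omega^2$ and $\omega^3$, since normalizing $d\omega^3$ may reintroduce terms into $d\omega^2$ unless the allowed substitutions are chosen to be triangular with respect to the $J$-series filtration. Keeping track of exactly which phase/scaling freedom remains after each step — and verifying that the leftover freedom is precisely enough to reach \eqref{nonnilpotent} but not more (so that $E$, $A$, $b$ are genuine moduli subject only to $|E|=1$, $b\in\mathbb R\setminus\{0\}$) — is where the real work lies; the $d^2=0$ computations themselves are routine wedge-product algebra.
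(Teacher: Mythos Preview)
The paper does not actually prove this proposition: it is quoted verbatim from \cite[Proposition~2~(a)]{U}, so there is no ``paper's own proof'' to compare against beyond that citation. Your plan is therefore a self-contained attempt, and it contains a genuine error at the very first step.

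You claim that for a non-nilpotent $J$ in dimension~6 the series stabilizes with $\frg_1^J$ of complex dimension~$1$, and you then try to pick $\omega^1$ from this ideal. In fact the opposite is true: if $\frg_1^J\neq 0$ then it has real dimension at least~$2$ (being $J$-invariant), the quotient $\frg/\frg_1^J$ is a nilpotent Lie algebra of real dimension~$\le 4$ with an induced complex structure, and in dimension~$\le 4$ every invariant complex structure on a nilpotent Lie algebra is nilpotent; pulling the quotient series back shows $\frg_l^J$ eventually reaches $\frg$, contradicting non-nilpotency. Hence $\frg_1^J=0$ here (cf.\ Remark~\ref{real-clasification} and the 1-dimensional center of the two admissible Lie algebras). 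Your choice of $\omega^1$ via $\frg_1^J$ therefore has no content, and the justification ``forces $d\omega^1=0$'' collapses.

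The correct entry point is the one the paper recalls just before the proposition: Salamon's basis with $d\omega^1=0$ and $d\omega^j\in\mathcal I(\omega^1,\dots,\omega^{j-1})$, together with the \cite{CFGU2} characterization that $J$ is nilpotent iff one can arrange $d\omega^j\in\bigwedge^2\langle\omega^1,\dots,\omega^{j-1},\bar\omega^1,\dots,\bar\omega^{j-1}\rangle$. Non-nilpotency is then encoded not by the $J$-series directly but by the impossibility of eliminating the $\omega^3,\bar\omega^3$ terms from $d\omega^2$ by any admissible change of $(1,0)$-basis; this is what forces the $\omega^{13}$ and $\omega^{1\bar3}$ terms to survive. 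Once you start from that, your Steps~2--3 (triangular basis changes, $d^2=0$ to extract $|E|=1$ and $b\in\mathbb R$, and $b\neq 0$ from non-nilpotency) are the right kind of argument, and your remark about the bookkeeping being the real work is accurate.
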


A complex structure $J$ on $\frg$ is said to be {\it equivalent} to
a complex structure $J'$ on $\frg'$ if there is an isomorphism
$A\colon \frg \longrightarrow \frg'$ of Lie algebras such that
$J'A=AJ$. Equivalently, the linear isomorphism $A^*\colon \frg'^*
\longrightarrow \frg^*$ commutes with the Chevalley-Eilenberg
differentials and the extension of $A^*$ to the complexified
exterior algebra preserves the bigraduations induced by $J$ and
$J'$.

In the next result we prove that up to equivalence there are only
four non-nilpotent complex structures in dimension six.

\begin{prop}\label{str-eq-red-h19}
Let $J$ be a non-nilpotent complex structure on a 6-dimensional
nilpotent Lie algebra $\frg$. Then, there is a $(1,0)$-basis
$\{\omega^j\}_{j=1}^3$ satisfying
\begin{equation}\label{non-nilp-reducido} \begin{cases}
\begin{array}{lcl}
d\omega^1 \zzz & = &\zzz 0,\\
d\omega^2 \zzz & = &\zzz  \omega^{13} + \omega^{1\bar{3}} \, ,\\
d\omega^3 \zzz & = &\zzz  i\epsilon\, \omega^{1\bar{1}} \pm i\,
(\omega^{1\bar{2}} - \omega^{2\bar{1}}) ,
\end{array}
\end{cases}
\end{equation}
with $\epsilon=0,1$. The four complex structures $J_{\epsilon}^{\pm}$
$(\epsilon=0,1)$ corresponding to the different choices of the
coefficients in $(\ref{non-nilp-reducido})$ are not equivalent.
\end{prop}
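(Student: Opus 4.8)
The plan is to start from the normal form $(\ref{nonnilpotent})$ provided by Proposition~\ref{J-red}, and to reduce the parameters $A,E,b$ step by step by applying suitable linear changes of the $(1,0)$-coframe $\{\omega^1,\omega^2,\omega^3\}$, keeping track at each stage of which changes preserve the triangular shape of the structure equations. First I would normalize $b$: rescaling $\omega^3$ (and adjusting $\omega^2$ accordingly) together with using the $\pm$ ambiguity in the sign of $b$, one can bring the real non-zero constant $b$ to $b=1$, which is the source of the two sign choices $\pm$ appearing in $(\ref{non-nilp-reducido})$. Next I would deal with $E$: since $|E|=1$, after fixing $b$ one can absorb the phase of $E$ by multiplying $\omega^1$ and $\omega^3$ by appropriate unit complex numbers and $\omega^2$ by a compensating phase, so that $E$ becomes $1$; one must check that these rotations do not reintroduce a coefficient on $\omega^{13}$ different from the one on $\omega^{1\bar 3}$, which is exactly what forces the coordinated choice of phases. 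Finally, with $E=1$ and $b=1$ fixed, I would treat $A\in\mathbb{C}$: replacing $\omega^3$ by $\omega^3 + (\text{const})\,\omega^2 + (\text{const})\,\bar\omega^2$ and $\omega^2$ by a multiple of itself, one kills the real part of $A$ and scales what remains, leaving only a purely imaginary multiple $i\epsilon$ of $\omega^{1\bar 1}$; a rescaling argument then shows that the only invariant is whether this coefficient is zero or not, giving $\epsilon\in\{0,1\}$. Assembling these reductions yields the equations $(\ref{non-nilp-reducido})$.

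For the non-equivalence of the four resulting structures I would separate two issues. That $\epsilon=0$ and $\epsilon=1$ give inequivalent complex structures follows because they live on non-isomorphic Lie algebras: computing the descending or ascending central series (equivalently, reading off $d$ on the real underlying coframe) shows that $\epsilon=0$ yields $(0,0,0,12,23,14-35)$ while $\epsilon=1$ yields $(0,0,12,13,23,14+25)$, as asserted in the Introduction, and these have different dimensions of $\frg_1$ (the center) or of $[\frg,\frg]$, hence are not isomorphic; since equivalence of complex structures requires an isomorphism of the underlying Lie algebras, $J_0^{\pm}$ cannot be equivalent to $J_1^{\pm}$. It then remains to distinguish $J_\epsilon^{+}$ from $J_\epsilon^{-}$ for each fixed $\epsilon$. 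Here I would argue that any equivalence would be, in particular, a linear automorphism of $\gc^*$ preserving the bigradation and commuting with $d$; writing out the most general such map in the basis $\{\omega^j\}$ — it must send $\omega^1$ to a multiple of $\omega^1$ (the only closed $(1,0)$-form up to scale), then $\omega^2$ into $\mathrm{span}\{\omega^1,\omega^2\}$, etc. — and imposing compatibility with the two structure equations forces a relation between the coefficient of $\omega^{1\bar 2}-\omega^{2\bar 1}$ and the square of the factor on $\omega^1$, whose sign cannot be reversed. This is the step I expect to be the main obstacle: one has to show that no admissible coframe change can flip the sign in front of $i(\omega^{1\bar 2}-\omega^{2\bar 1})$ while keeping the rest of $(\ref{non-nilp-reducido})$ fixed, which amounts to a short but careful analysis of the finitely many parameters in the triangular change of basis together with the reality condition coming from $\overline{\omega^j}$.

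I would organize the write-up so that the reduction half is a sequence of explicit substitutions with the effect on $(A,E,b)$ recorded, and the non-equivalence half invokes the Lie-algebra invariant to separate $\epsilon$ and a direct coframe computation to separate the signs; modulo the bookkeeping, all the computations are linear algebra over $\mathbb{C}$ constrained by complex conjugation.
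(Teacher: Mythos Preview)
Your reduction strategy is broadly right and close to the paper's, but your proposed handling of the coefficient $A$ contains a genuine gap. You suggest ``replacing $\omega^3$ by $\omega^3 + (\text{const})\,\omega^2 + (\text{const})\,\bar\omega^2$''; the second shift is not admissible, since adding a $(0,1)$-form to $\omega^3$ destroys the $(1,0)$-type of the coframe. The first shift alone does not help either: because $d\omega^2 = \omega^{13}+\omega^{1\bar3}$, replacing $\omega^3$ by $\omega^3+c\,\omega^2$ introduces $c(\omega^{13}+\omega^{1\bar3})$ in $d\omega^3$ rather than modifying the $\omega^{1\bar1}$-coefficient. The move that actually works --- and that the paper uses --- is to shift $\omega^2$ by a multiple of $\omega^1$: since $d\omega^1=0$ this leaves $d\omega^2$ unchanged, while in $d\omega^3$ the terms $\pm i(\omega^{1\bar2}-\omega^{2\bar1})$ pick up a \emph{real} contribution to the $\omega^{1\bar1}$-coefficient. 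Thus one can kill $\Re B$ but not $\Im B$, and the dichotomy $\epsilon=0$ versus $\epsilon=1$ is exactly $\Im B=0$ versus $\Im B\neq0$; a final real rescaling normalizes the surviving imaginary part. Your description of the \emph{outcome} of this step is correct, but the substitution you wrote down would not produce it.

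Two smaller points. First, you cannot literally bring $b$ to $1$ and still have a $\pm$ ambiguity; what survives the rescaling is the \emph{sign} of $b$, i.e.\ one normalizes to $b=\pm1$. Second, for separating $\epsilon=0$ from $\epsilon=1$ via the underlying Lie algebra, be careful which invariant you invoke: both $(0,0,0,12,23,14-35)$ and $(0,0,12,13,23,14+25)$ have one-dimensional center, so $\dim\frg_1$ does not distinguish them; $\dim[\frg,\frg]$ does (it is $3$ versus $4$). The paper bypasses this by doing a single direct computation covering both invariants at once: writing $\sigma^i=\sum_j m_{ij}\omega^j$ and imposing both sets of structure equations forces $m_{33}\in\mathbb{R}^*$, $m_{33}\epsilon=|m_{11}|^2\epsilon'$ and $c=|m_{11}|^2c'$, whence $\epsilon=\epsilon'$ and $c=c'$ simultaneously --- which is precisely the ``sign cannot be reversed because it is tied to $|m_{11}|^2>0$'' phenomenon you anticipate for the $\pm$ case.
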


\begin{proof}
From Proposition~\ref{J-red} we have the existence of a (1,0)-basis
$\{\omega''^j\}_{j=1}^3$ satisfying
$$
d\omega''^1 = 0,\quad d\omega''^2 = E\, \omega''^{13} +
\omega''^{1\bar{3}},\quad d\omega''^3 = A\, \omega''^{1\bar{1}} +
ib\, \omega''^{1\bar{2}} - ib\bar{E}\, \omega''^{2\bar{1}},
$$
where $A,E\in \mathbb{C}$ with $|E|=1$ and $b\in \mathbb{R}-\{0\}$.
The coefficients of these equations are easily seen to be reduced to
$E=1$ and $b=\pm 1$ when we consider the new (1,0)-basis $\{
\omega'^1=\sqrt{|b|}\, \omega''^1,\ \omega'^2=\bar{\vartheta}
\sqrt{|b|}\, \omega''^2,\ \omega'^3=\vartheta\, \omega''^3\}$,
$\vartheta$ being a non-zero solution of $\bar{\vartheta}
E=\vartheta$. In fact, in terms of the basis $\{\omega'^j\}_{j=1}^3$
the structure equations become
$$d\omega'^1=0,\quad d\omega'^2=\omega'^{13} +
\omega'^{1\bar3},\quad d\omega'^3=B\,\omega'^{1\bar1}\pm
i\,(\omega'^{1\bar2}-\omega'^{2\bar1}),$$ for some $B\in
\mathbb{C}$. Let us denote $x=\Re B$ and $y=\Im B$. Now, if
$y\not=0$ then with respect to the new (1,0)-basis $\{ \omega^1= i\,
\omega'^1,\ \omega^2= \mp {x\over 2y}\, \omega'^1 + {i\over y}\,
\omega'^2,\ \omega^3= {1\over y}\, \omega'^3 \}$ the equations above
reduce to (\ref{non-nilp-reducido}) with $\epsilon=1$. In other
case, if $B=x\in \mathbb{R}-\{0\}$ then we get
(\ref{non-nilp-reducido}) with $\epsilon=0$ with respect to $\{
\omega^1= i\, \omega'^1, \omega^2= \pm \omega'^1 - {2i\over x}\,
\omega'^2, \omega^3= -{2\over x}\, \omega'^3 \}$.

Finally, let us consider $\{\omega^j\}_{j=1}^3$ and
$\{\sigma^j\}_{j=1}^3$ $(1,0)$-bases corresponding to two complex
structures given by \eqref{non-nilp-reducido}, i.e.
\begin{equation*}
\begin{array}{l}
d\omega^1=0,\quad d\omega^2=\omega^{13}+\omega^{1\bar3},\quad
d\omega^3= i\epsilon\, \omega^{1\bar{1}} + i c\,(\omega^{1\bar2} - \omega^{2\bar1}),\\[8pt]
d\sigma^1=0,\quad d\sigma^2=\sigma^{13}+\sigma^{1\bar3},\quad
d\sigma^3= i\epsilon'\, \sigma^{1\bar{1}} + i c'\,(\sigma^{1\bar2} -
\sigma^{2\bar1}),\end{array}
\end{equation*}
where $\epsilon,\epsilon'\in\{0,1\}$ and $c,c'\in\{\pm 1\}$. If we
express $\sigma^i=m_{i1}\,\omega^1 + m_{i2}\,\omega^2 +
m_{i3}\,\omega^3$ for $i= 1, 2, 3$ and $(m_{ij})\in$
GL($3,\mathbb{C}$), then $d\sigma^i=m_{i1}\,d\omega^1 +
m_{i2}\,d\omega^2 + m_{i3}\,d\omega^3$ is equivalent to
$$m_{12}=m_{13}=m_{23}=m_{31}=m_{32}=0,$$ $$m_{33}\in\mathbb{R} - \{0\},\quad \Im(m_{11}\overline
{m_{21}})=0,\quad m_{33}\, c= m_{11}\overline {m_{22}}\, c',\quad
m_{11}m_{33}=m_{22},\quad m_{33}\, \epsilon= |m_{11}|^2\,
\epsilon'.$$ These last conditions imply that $\epsilon=\epsilon'$
and $c=|m_{11}|^2\, c'$, so necessarily $c=c'$. In conclusion, there
does not exist any equivalence between any two of the complex
structures~\eqref{non-nilp-reducido}.
\end{proof}

The classification of 6-dimensional nilpotent Lie algebras admitting
non-nilpotent complex structure is as follows:

\begin{prop}\label{h19-h26}
If $\epsilon=0$ in $\eqref{non-nilp-reducido}$ then the Lie algebra
$\frg$ is isomorphic to $(0,0,0,12,23,14-35)$, whereas if
$\epsilon=1$ then $\frg\cong (0,0,12,13,23,14+25)$.
\end{prop}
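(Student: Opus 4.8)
The plan is to work directly from the structure equations~\eqref{non-nilp-reducido} and produce, in each case $\epsilon=0$ and $\epsilon=1$, an explicit real coframe $\{e^1,\dots,e^6\}$ of $\frg^*$ whose Chevalley--Eilenberg differentials match the target equations $(0,0,0,12,23,14-35)$ and $(0,0,12,13,23,14+25)$ respectively. Concretely, I would set $\omega^j = e^{2j-1} + i\,e^{2j}$ for $j=1,2,3$ (so the real and imaginary parts of the $(1,0)$-forms give the real coframe), expand each equation in~\eqref{non-nilp-reducido} into its real and imaginary parts using $\omega^{jk} = (e^{2j-1}+ie^{2j})\wedge(e^{2k-1}+ie^{2k})$ and $\omega^{j\bar k} = (e^{2j-1}+ie^{2j})\wedge(e^{2k-1}-ie^{2k})$, and read off $de^i$ for $i=1,\dots,6$. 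If the resulting coframe does not literally give the listed Lie algebra, a further linear change of the real basis (a rescaling and possibly a permutation of the $e^i$) will bring it into the standard form; the point is only to exhibit \emph{some} isomorphism.

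For $\epsilon=0$: from $d\omega^1=0$ we get $de^1=de^2=0$, so there are three closed generators once we also note that $d\omega^2=\omega^{13}+\omega^{1\bar3}$ has no $\omega^{1\bar1}$-term — indeed the real/imaginary parts of $\omega^{13}+\omega^{1\bar3}=(e^1+ie^2)\wedge(2e^5)=2e^{15}+2ie^{25}$ give $de^3=2e^{15}$, $de^4=2e^{25}$ (after the obvious rescaling these are of the form $e^{15},e^{25}$, i.e.\ two more ``$2$-step'' generators reachable from degree-$1$ data, matching the $12$ and $23$ pattern up to relabeling). Then $d\omega^3 = \pm i(\omega^{1\bar2}-\omega^{2\bar1})$ expands to a combination of $e^{15},e^{25},e^{16},e^{26},\dots$ whose real and imaginary parts give $de^5,de^6$ as $2$-forms in $e^{13},e^{14},e^{23},e^{24}$ — precisely the shape $14-35$ after the appropriate relabeling. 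I would carry out this bookkeeping carefully, collect the structure constants, and then write down the explicit isomorphism $A^*$ to $(0,0,0,12,23,14-35)$; verifying $A^*$ commutes with $d$ is then a finite check. The case $\epsilon=1$ is handled identically, the only difference being that the $i\epsilon\,\omega^{1\bar1}=i\,\omega^{1\bar1}$ term now contributes a nonzero $e^{12}$-part to $de^5$ (or $de^6$), which is what promotes the algebra from $(0,0,0,12,23,14-35)$ to $(0,0,12,13,23,14+25)$ — note in particular that $\dim\frg_1$ drops from $3$ to $2$, which already distinguishes the two algebras and confirms we cannot be landing in the wrong one.

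The main obstacle is purely organizational rather than conceptual: keeping track of all the real and imaginary components without sign errors, and then guessing the right final linear change of real coframe so that the structure constants come out \emph{exactly} in the tabulated normal form $(0,0,0,12,23,14-35)$ or $(0,0,12,13,23,14+25)$ rather than some scaled/permuted variant of it. One clean way to organize the endgame is to compute a few isomorphism invariants of the algebra obtained from~\eqref{non-nilp-reducido} — the dimensions of the terms of the lower and upper central series, whether the algebra is $2$-step or $3$-step nilpotent — and match them against the known invariants of the two candidate algebras (these are recorded in the classifications of \cite{S,U}); since in dimension six the relevant algebras are distinguished by such data, this pins down the isomorphism type, and then exhibiting the explicit coframe change is a routine completion. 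I would present the explicit coframes and the resulting isomorphisms and leave the verification that each $A^*$ intertwines the differentials as a direct computation.
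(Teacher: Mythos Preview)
Your approach is correct and essentially the same as the paper's: both produce an explicit real coframe from the complex $(1,0)$-basis and verify the structure equations match the target normal form. The paper is simply more economical, writing down the final real basis directly (e.g.\ for $\epsilon=0$: $\alpha^1 - i\alpha^3 = \omega^1$, $\alpha^4 + i\alpha^5 = \omega^2$, $\tfrac{1}{2}\alpha^2 \pm 2i\alpha^6 = \omega^3$) rather than passing through the intermediate coframe $\omega^j = e^{2j-1}+ie^{2j}$ and then relabeling; your computation $de^5=0$, $de^6=\pm 2(e^{13}+e^{24})$, etc., is exactly what underlies their choice.
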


\begin{proof}
For the case $\epsilon=0$ it suffices to consider the real basis
$\{\alpha^1,\ldots,\alpha^6 \}$ for $\frg^*$ given by $\alpha^1 -
i\, \alpha^3 = \omega^1$, $\alpha^4 + i\, \alpha^5 = \omega^2$ and
${1\over 2}\, \alpha^2 \pm 2i\, \alpha^6 = \omega^3$. For
$\epsilon=1$ it suffices to consider the basis
$\{\alpha^1,\ldots,\alpha^6 \}$ for $\frg^*$ given by ${\sqrt{2}
\over 2}(\alpha^1 + i\, \alpha^2) = \omega^1$, $\sqrt{2}(\alpha^4 +
i\, \alpha^5) = \omega^2$ and $\alpha^3 \pm 2i\, \alpha^6 =
\omega^3$.
\end{proof}

In the next result we summarize the previous propositions.

\begin{cor}\label{Jotas}
The Lie algebras $(0,0,0,12,23,14-35)$ and $(0,0,12,13,23,14+25)$
are the only 6-dimensional nilpotent Lie algebras admitting
non-nilpotent complex structure. Moreover, up to equivalence, each
of these Lie algebras has only two complex structures.
\end{cor}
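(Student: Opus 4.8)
The plan is to assemble Corollary~\ref{Jotas} directly from the three results that precede it, since nothing new needs to be proved. First I would recall that by Proposition~\ref{str-eq-red-h19}, any non-nilpotent complex structure $J$ on a $6$-dimensional nilpotent Lie algebra $\frg$ admits a $(1,0)$-basis in which the structure equations take the normal form \eqref{non-nilp-reducido} for some $\epsilon\in\{0,1\}$ and some choice of sign. In particular the underlying Lie algebra $\frg$ must be one of the (at most two) Lie algebras that can carry such equations. By Proposition~\ref{h19-h26}, these are precisely $(0,0,0,12,23,14-35)$ (when $\epsilon=0$) and $(0,0,12,13,23,14+25)$ (when $\epsilon=1$), and conversely each of these algebras does admit equations of the form \eqref{non-nilp-reducido} for the corresponding value of $\epsilon$, so each genuinely supports a non-nilpotent complex structure. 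This establishes that these two algebras are exactly the $6$-dimensional nilpotent Lie algebras admitting a non-nilpotent complex structure.

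For the second assertion, I would observe that Proposition~\ref{str-eq-red-h19} produces, for a fixed value of $\epsilon$, the two structures $J_\epsilon^{+}$ and $J_\epsilon^{-}$, and that the last paragraph of its proof shows no two of the four structures $J_\epsilon^{\pm}$ $(\epsilon=0,1)$ are equivalent. Since by Proposition~\ref{h19-h26} the value of $\epsilon$ is determined by the isomorphism class of $\frg$, restricting to a single one of the two algebras leaves exactly the two inequivalent structures $J_\epsilon^{+}$ and $J_\epsilon^{-}$ for the corresponding $\epsilon$. Hence each of the two Lie algebras carries exactly two complex structures up to equivalence.

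I do not anticipate a genuine obstacle here: the corollary is a bookkeeping consequence of Propositions~\ref{str-eq-red-h19} and~\ref{h19-h26}. The only point requiring a word of care is the converse direction in the first claim, namely that both listed algebras actually \emph{do} admit a non-nilpotent complex structure (not merely that they are the only candidates); this is exactly what the explicit change of basis in the proof of Proposition~\ref{h19-h26} provides, since that argument exhibits, for each algebra, a real coframe realizing the structure equations \eqref{non-nilp-reducido}. So the proof amounts to citing Propositions~\ref{str-eq-red-h19} and~\ref{h19-h26} and noting that $\epsilon$ is an invariant of the Lie algebra isomorphism type.
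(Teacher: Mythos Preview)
Your approach is exactly the paper's: the corollary is stated there without proof, merely as a summary of Propositions~\ref{str-eq-red-h19} and~\ref{h19-h26}, and you have spelled out that summary carefully.

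There is one small gap worth flagging. Your argument shows that each of the two Lie algebras carries exactly two \emph{non-nilpotent} complex structures up to equivalence, but the corollary (and the way the paper uses it later, e.g.\ in the introduction where it says ``any invariant complex structure on the manifold $N$ is equivalent to $J_0^+$ or $J_0^-$'') asserts that these are \emph{all} complex structures. To close this, you need the observation in Remark~\ref{real-clasification}: both algebras have $1$-dimensional center, so the first term $\frg_1^J$ of the $J$-adapted series---which is a $J$-invariant subspace of the center---must be zero for any complex structure $J$, hence no nilpotent complex structure can exist on them. With that one extra sentence your proof is complete.
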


\begin{rmrk}\label{real-clasification}
{\rm If a 6-dimensional nilpotent Lie algebra $\frg$ has a nilpotent
complex structure, then all the complex structures on $\frg$ must be
nilpotent. As it is proved in \cite[Corollary~7]{U}, this assertion
follows from the fact that the first term $\frg_l^J$ in the series
adapted to $J$, which is a $J$-invariant ideal of $\frg$ contained
in the center, is non-trivial; therefore, such a Lie algebra $\frg$
is not isomorphic neither to $(0,0,0,12,23,14-35)$ nor to
$(0,0,12,13,23,14+25)$ because they have 1-dimensional center. This
result, together with the classification of 6-dimensional nilpotent
Lie algebras admitting nilpotent complex structure obtained
in~\cite{CFGU1}, provides the classification of 6-dimensional
nilpotent Lie algebras admitting complex structure, given by Salamon
in~\cite{S}.}
\end{rmrk}

\begin{rmrk}\label{notation}
{\rm As it is observed in \cite{S}, the Lie algebras
$(0,0,0,12,23,14-35)$ and $(0,0,12,13,23,14+25)$ correspond to
$\frh_{19}^-$ and $\frh_{26}^+$, respectively, in the notation given
in \cite{CFGU1}.}
\end{rmrk}

Now, let $\frg$ be a Lie algebra of dimension 6. A {\it Hermitian
structure} on $\frg$ is a pair $(J,g)$, where $J$ is a complex
structure on $\frg$ and $g$ is an inner product on $\frg$ compatible
with $J$ in the usual sense, i.e. $g(\cdot,\cdot)=g(J\cdot,J\cdot)$.
The associated {\it fundamental form} $F\in \bigwedge^2 \frg^*$ is
defined by $F(X,Y)=g(X,JY)$ and expresses in terms of any basis
$\{\omega^j\}_{j=1}^3$, of type (1,0) with respect to $J$, by
\begin{equation}\label{2forma}
2\, F=i (r^2\omega^{1\bar{1}} + s^2\omega^{2\bar{2}} +
t^2\omega^{3\bar{3}})
+u\,\omega^{1\bar{2}}-\bar{u}\,\omega^{2\bar{1}} +
v\,\omega^{2\bar{3}}-\bar{v}\,\omega^{3\bar{2}}+
z\,\omega^{1\bar{3}}-\bar{z}\,\omega^{3\bar{1}},
\end{equation}
for some $r,s,t\in\mathbb{R}$ and $u,v,z\in\mathbb{C}$. Since we are
using the convention $(J\alpha)(X)=-\alpha(JX)$ for
$\alpha\in\frg^*$, the inner product $g$ is given by
\begin{equation}\label{metric}
g= r^2\,\omega^1\omega^{\bar{1}} + s^2\, \omega^2\omega^{\bar{2}} +
t^2\, \omega^3\omega^{\bar{3}} - \frac{i}{2}(u\,
\omega^1\omega^{\bar{2}} - \bar{u}\, \omega^2\omega^{\bar{1}} +v\,
\omega^2\omega^{\bar{3}} - \bar{v}\, \omega^3\omega^{\bar{2}}+ z\,
\omega^1\omega^{\bar{3}} -\bar{z}\, \omega^3\omega^{\bar{1}}).
\end{equation}
Here $\omega^j\omega^{\bar{k}}=\frac12
(\omega^j\otimes\omega^{\bar{k}} + \omega^{\bar{k}}\otimes\omega^j)$
denotes the symmetric product of $\omega^j$ and $\omega^{\bar{k}}$.
Notice that the positive definiteness of $g$ implies that the
coefficients $r^2,\,s^2,\,t^2$ are non-zero real numbers and
$u,\,v,\,z\in \mathbb{C}$ satisfy $r^2s^2>|u|^2,$
$s^2t^2>|v|^2,\,r^2t^2>|z|^2$ and $r^2s^2t^2 + 2\,\Re(i\bar u\bar
vz)>t^2|u|^2 + r^2|v|^2 + s^2|z|^2$.

Fixed $J$, since $g$ and $F$ are mutually determined by each other,
we shall also denote the Hermitian structure $(J,g)$ by the pair
$(J,F)$. Recall that the Hermitian structure $(J,F)$ is said to be
{\it balanced} if $F^2$ is a closed form or, equivalently, $F\wedge
dF=0$. Our goal in this section is to classify the balanced
Hermitian structures $(J,F)$ on 6-dimensional nilpotent Lie algebras
$\frg$ with $J$ non-nilpotent, up to equivalence. We recall that a
Hermitian structure $(J,F)$ on $\frg$ is said to be {\it equivalent}
to a Hermitian structure $(J',F')$ on $\frg'$ if there is an
isomorphism $A\colon \frg \longrightarrow \frg'$ of Lie algebras
such that $J'A=AJ$ and $F=A^*F'$. Notice that this implies that
$g=A^*g'$, where $g$ and $g'$ denote the inner products associated
to $(J,F)$ and $(J',F')$, respectively.

The only 6-dimensional nilpotent Lie algebra admitting balanced
Hermitian metric with respect to a non-nilpotent complex structure
is $(0,0,0,12,23,14-35)$, which from now on we shall denote it by
$\frn$ instead of $\frh_{19}^-$. Next we describe the balanced
Hermitian metrics on $\frn$ with respect to a (1,0)-basis satisfying
\eqref{non-nilp-reducido} for $\epsilon=0$.

\begin{prop}\label{unique-balanced}
If $\frg$ is a 6-dimensional nilpotent Lie algebra admitting
balanced Hermitian structure $(J,F)$ with respect to a non-nilpotent
complex structure $J$, then $\frg\cong \frn=(0,0,0,12,23,14-35)$.
Moreover, there is a $(1,0)$-basis $\{\omega^j\}_{j=1}^3$ satisfying
\begin{equation}\label{h19complexreducido}
d\omega^1 = 0,\quad\ d\omega^2 =  \omega^{13} +
\omega^{1\bar{3}},\quad\ d\omega^3 = \pm i\, (\omega^{1\bar{2}} -
\omega^{2\bar{1}}),
\end{equation}
such that the fundamental form $F$ is given by
\begin{equation}\label{h19fundform}
2F=i(r^2\,\omega^{1\bar1} + s^2\,\omega^{2\bar2} +
t^2\,\omega^{3\bar3}) + u\,\omega^{1\bar2} -\bar u\,\omega^{2\bar1}
+ v\,\omega^{2\bar3} -\bar v\,\omega^{3\bar2}+ z\,\omega^{1\bar3}
-\bar z\,\omega^{3\bar1},
\end{equation}
with
\begin{equation}\label{balanced_condition_h19}
u+\bar u=0, \quad\quad z=-i uv/s^2.
\end{equation}
\end{prop}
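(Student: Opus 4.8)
The plan is to combine Proposition~\ref{unique-balanced}'s starting point --- namely the reduction to the Lie algebra $\frn=(0,0,0,12,23,14-35)$ with a $(1,0)$-basis satisfying \eqref{non-nilp-reducido} for $\epsilon=0$, i.e. $d\omega^1=0$, $d\omega^2=\omega^{13}+\omega^{1\bar3}$, $d\omega^3=\pm i(\omega^{1\bar2}-\omega^{2\bar1})$ --- with a direct computation of the balanced condition $F\wedge dF=0$. So the first task is purely algebraic bookkeeping: starting from the general fundamental form \eqref{2forma} (with the given basis), compute $dF$ by applying $d$ to each summand via the structure equations, then compute $F\wedge dF$, expand in the basis of $5$-forms, and set all coefficients to zero. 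Since $\dim_{\mathbb{C}}\frg=3$, the relevant $5$-forms are spanned by the six elements obtained by omitting one of $\omega^1,\omega^2,\omega^3,\omega^{\bar1},\omega^{\bar2},\omega^{\bar3}$ from the top form, so there are at most six scalar equations to track.

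Concretely, from \eqref{h19complexreducido} one gets $d\omega^{1\bar1}=0$, while $d\omega^{2\bar2}$, $d\omega^{3\bar3}$, $d\omega^{1\bar2}$, $d\omega^{2\bar1}$, $d\omega^{1\bar3}$, $d\omega^{3\bar1}$, $d\omega^{2\bar3}$, $d\omega^{3\bar2}$ are each a sum of $3$-forms that one reads off by Leibniz; for instance $d\omega^{2\bar3}=d\omega^2\wedge\omega^{\bar3}-\omega^2\wedge d\omega^{\bar3}=(\omega^{13}+\omega^{1\bar3})\wedge\omega^{\bar3}\mp i\,\omega^2\wedge(\omega^{\bar1\bar2}-\omega^{\bar2\bar1})=\omega^{13\bar3}\pm 2i\,\omega^{2\bar1\bar2}$ (signs to be checked carefully against conventions). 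Plugging these into $2\,dF$ and then wedging with $2F$ from \eqref{2forma}, one collects the coefficient of each of the six basic $5$-forms. The expectation --- to be verified --- is that the vanishing of these coefficients is equivalent to exactly the two relations in \eqref{balanced_condition_h19}: $u+\bar u=0$ (i.e. $u$ purely imaginary, coming from the terms that force a certain real part to vanish) and $z=-iuv/s^2$ (expressing $z$ in terms of the other parameters). The remaining coefficients should either vanish identically or be automatically implied by these two; in particular the $t^2$, $v$, and $|u|^2$ terms should reorganize so that once $u+\bar u=0$ is imposed, the unique surviving constraint pins down $z$.

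The main obstacle is not conceptual but organizational: making sure the sign $\pm$ in $d\omega^3$, the sign convention $(J\alpha)(X)=-\alpha(JX)$, and the ordering conventions in $\omega^{jk}$ versus $\omega^{\bar jk}$ are handled consistently, so that the six expanded coefficients genuinely collapse to precisely \eqref{balanced_condition_h19} and not to something stronger or weaker. A useful sanity check along the way is that the result must be independent of the sign in $d\omega^3$ (one can absorb it by replacing $\omega^3$ with $-\omega^3$, which preserves the form of \eqref{h19fundform} after renaming $v\mapsto -v$, $z\mapsto -z$), and that the parameters $r,s,t,v$ remain free subject only to the positivity inequalities recorded after \eqref{metric}; this freedom is exactly what later gives the two-parameter-style Families~I and~II of Theorem~\ref{families-balanced-h19}. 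Finally, one should note that the reduction of the structure equations from \eqref{non-nilp-reducido} with $\epsilon=0$ to \eqref{h19complexreducido} (i.e. normalizing the coefficient $c=\pm i$ to $\pm i$, which is already done) requires no further change of basis, so the $(1,0)$-basis witnessing \eqref{h19complexreducido} is the same one supplied by Proposition~\ref{str-eq-red-h19}.
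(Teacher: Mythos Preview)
Your computation plan for deriving \eqref{balanced_condition_h19} from $F\wedge dF=0$ is sound, but there is a genuine logical gap in the first step. The proposition has two claims: (i) that a balanced non-nilpotent $(J,F)$ forces $\frg\cong\frn$, i.e.\ forces $\epsilon=0$ in the classification \eqref{non-nilp-reducido}; and (ii) that the balanced condition then reads as \eqref{balanced_condition_h19}. You have treated (i) as already established and jumped straight to the $\epsilon=0$ equations \eqref{h19complexreducido}, but nothing prior to this proposition rules out $\epsilon=1$. Propositions~\ref{str-eq-red-h19} and~\ref{h19-h26} only say which Lie algebra corresponds to which $\epsilon$; they do not say that $(0,0,12,13,23,14+25)$ carries no balanced metric. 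That exclusion is precisely part of what Proposition~\ref{unique-balanced} asserts.

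The paper handles this by quoting \cite[Proposition~25(a)]{U}, which computes the balanced condition for the full family \eqref{non-nilp-reducido} with $\epsilon\in\{0,1\}$ and obtains
\[
i\epsilon s^2 \pm (u+\bar u)=0,\qquad z=-iuv/s^2.
\]
Since $u+\bar u$ is real and $i\epsilon s^2$ is purely imaginary, the first equation splits into $\epsilon s^2=0$ (whence $\epsilon=0$, as $s^2>0$) and $u+\bar u=0$. Your direct-computation approach can reproduce this without citing \cite{U}, but you must run it with the general equations \eqref{non-nilp-reducido}, keeping the $i\epsilon\,\omega^{1\bar1}$ term in $d\omega^3$. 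The extra term contributes an $i\epsilon s^2$ piece to the coefficient of one of the basic $5$-forms, and its vanishing is what kills $\epsilon=1$. Once that is in place, the rest of your outline---expanding $F\wedge dF$ in the six basic $5$-forms and reading off \eqref{balanced_condition_h19}---is correct and is, in effect, a self-contained rederivation of the cited result.
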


\begin{proof}
From \cite[Proposition~25~(a)]{U} we have that if $J$ is a
non-nilpotent complex structure defined
by~$(\ref{non-nilp-reducido})$, then $(J,F)$ is balanced if and only
if $i \epsilon s^2 \pm(u+\bar{u})=0$ and $z=-iuv/s^2$, which is
equivalent to the conditions
$$
\epsilon=0,\quad\quad u+\bar{u}=0 \quad\quad \mbox{and} \quad\quad
z=-iuv/s^2.
$$
The result follows from Proposition~\ref{h19-h26}.
\end{proof}

Notice that as a consequence of this result any complex structure
$J$ on $\frn$ admits balanced $J$-Hermitian structures. Next we
classify balanced Hermitian structures on the Lie algebra $\frn$ up
to equivalence. First, we need the following result which describes
the automorphisms of the Lie algebra $\frn$ preserving $J_0^+$ or
$J_0^-$.

\begin{lemma}\label{cambioestructurah19}
Let $\{\omega^j\}^3_{j=1}$ be a $(1,0)$-basis satisfying
$\eqref{h19complexreducido}$. Then, the transformations given by
$\{\omega'^{1}=e^{i\theta}\,\omega^1,\,
\omega'^{2}=B\,\omega^1+c\,e^{i\theta}\omega^2,\,
\omega'^{3}=c\,\omega^3\}$, where $c\in \mathbb{R}-\{0\}$,
$\theta\in [0,2\pi),\,B\in\mathbb C$ such that
$\Im(e^{i\theta}\,\bar B)=0$, are the only ones preserving the
structure equations $\eqref{h19complexreducido}$.
\end{lemma}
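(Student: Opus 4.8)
The plan is to parametrize a general linear change of $(1,0)$-basis and impose that it preserve the structure equations \eqref{h19complexreducido}. Write $\omega'^i = \sum_{j=1}^3 m_{ij}\,\omega^j$ with $(m_{ij})\in\mathrm{GL}(3,\mathbb{C})$. Since $d\omega^1=0$ and $d\omega'^1$ must vanish, $d\omega'^1 = m_{12}\,d\omega^2 + m_{13}\,d\omega^3$; expanding $d\omega^2 = \omega^{13}+\omega^{1\bar 3}$ and $d\omega^3 = \pm i(\omega^{1\bar 2}-\omega^{2\bar 1})$ and reading off the coefficients of the independent 2-forms $\omega^{13}$, $\omega^{1\bar 3}$, $\omega^{1\bar 2}$, $\omega^{2\bar 1}$ forces $m_{12}=m_{13}=0$. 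In particular $\omega'^1 = m_{11}\,\omega^1$, so $m_{11}\neq 0$, and I shall write $m_{11}=e^{i\theta}$ up to the real scaling which I absorb later; more precisely I will first keep $m_{11}$ arbitrary in $\mathbb{C}^*$ and see what the remaining equations force.

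Next I impose $d\omega'^3 = \pm i(\omega'^{1\bar 1} - \omega'^{3}\cdots)$ has the required form. Compute $d\omega'^3 = m_{31}\,d\omega^1 + m_{32}\,d\omega^2 + m_{33}\,d\omega^3 = m_{32}(\omega^{13}+\omega^{1\bar3}) \pm i\,m_{33}(\omega^{1\bar2}-\omega^{2\bar1})$. On the other hand $\omega'^{1\bar1} = |m_{11}|^2\,\omega^{1\bar1}$, and $\omega'^{1\bar2} = m_{11}\overline{m_{22}}\,\omega^{1\bar2} + m_{11}\overline{m_{23}}\,\omega^{1\bar3}$ (using $m_{12}=m_{13}=0$), and similarly for $\omega'^{2\bar1}$. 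Requiring $d\omega'^3 = \pm i(\omega'^{1\bar1} + (\text{coeff})(\omega'^{1\bar2}-\omega'^{2\bar1}))$ with the coefficient equal to $+1$ (to match \eqref{h19complexreducido}) forces: the $\omega^{13}$-component gives $m_{32}=0$; the $\omega^{1\bar1}$-component gives $|m_{11}|^2 = 0$ unless the coefficient of $\omega'^{1\bar1}$ on the right is compared correctly — actually since the right side must have \emph{no} $\omega^{1\bar1}$ term beyond what $\omega'^{1\bar1}$ contributes, and the left side $d\omega'^3$ has no $\omega^{1\bar1}$ term at all, we get that the coefficient multiplying $\omega'^{1\bar1}$ must vanish; but \eqref{h19complexreducido} with $\epsilon=0$ has no $\omega^{1\bar1}$ term, so this is automatic and the real content is matching $\pm i\,m_{33}(\omega^{1\bar2}-\omega^{2\bar1})$ with $\pm i\bigl(m_{11}\overline{m_{22}}\,\omega^{1\bar2} + m_{11}\overline{m_{23}}\,\omega^{1\bar3} - \overline{m_{11}}m_{22}\,\omega^{2\bar1} - \overline{m_{11}}m_{23}\,\omega^{3\bar1}\bigr)$. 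Comparing $\omega^{1\bar3}$: $\overline{m_{23}}m_{11}=0$, so $m_{23}=0$. Comparing $\omega^{1\bar2}$ and $\omega^{2\bar1}$: $m_{33}=m_{11}\overline{m_{22}}$ and $m_{33}=\overline{m_{11}}\,m_{22}$, hence $m_{33}\in\mathbb{R}$ and $m_{11}\overline{m_{22}}=\overline{m_{11}}\,m_{22}$, i.e. $m_{11}\overline{m_{22}}\in\mathbb{R}$.

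Finally I impose that $d\omega'^2 = \omega'^{13} + \omega'^{1\bar3}$. We have $d\omega'^2 = m_{21}\,d\omega^1 + m_{22}\,d\omega^2 + m_{23}\,d\omega^3 = m_{22}(\omega^{13}+\omega^{1\bar3})$ (using $m_{23}=0$). On the other hand $\omega'^{13} = \omega'^1\wedge\omega'^3 = m_{11}m_{33}\,\omega^{13}$ and $\omega'^{1\bar3} = m_{11}\overline{m_{33}}\,\omega^{1\bar3} = m_{11}m_{33}\,\omega^{1\bar3}$ since $m_{33}$ is real. So the requirement is $m_{22} = m_{11}m_{33}$. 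Combining with $m_{33}=\overline{m_{11}}\,m_{22}$ gives $m_{22} = m_{11}\overline{m_{11}}\,m_{22} = |m_{11}|^2\,m_{22}$, hence $|m_{11}|=1$; write $m_{11}=e^{i\theta}$. Then $m_{33}\in\mathbb{R}-\{0\}$ is a free parameter which I rename $c$ (note $m_{22}=e^{i\theta}c$, consistent with the stated form), and $m_{22}=c\,e^{i\theta}$; moreover $m_{21}=B$ is free, while $m_{21}=m_{31}=0$ need not hold — wait, $m_{31}$ is still unconstrained, but a nonzero $m_{31}$ would make $d\omega'^3$ pick up $m_{31}\,d\omega^1=0$, so actually $m_{31}$ is free too; however for the transformation to be of the advertised shape I should double-check whether $m_{31}$ must vanish. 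Re-examining: $m_{31}$ multiplies $d\omega^1=0$, so it contributes nothing to $d\omega'^3$, meaning it is genuinely unconstrained by the differential equations; but the Lemma restricts to transformations of the stated block-triangular form, so presumably one also wants $\omega'^3$ to involve only $\omega^3$ — I would remark that adding a $\omega^1$-term to $\omega'^3$ is harmless for the structure equations but is excluded by the normalization in the statement (or absorbed), and note the remaining constraint $m_{11}\overline{m_{22}}=e^{i\theta}\overline{c\,e^{i\theta}}=c\in\mathbb{R}$ is automatic. The one real condition on $B$ is $\Im(e^{i\theta}\bar B)=0$, which I still need to locate: it must come from requiring that $d\omega'^2$ have \emph{no} $\omega^{1\bar1}$ component — indeed $\omega'^1\wedge\overline{\omega'^2}$-type terms are not present in $d\omega'^2$, but $\omega'^{13}+\omega'^{1\bar3}$ also has none, so where does $\Im(e^{i\theta}\bar B)=0$ enter? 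It enters from $d\omega'^3$: I need $\omega'^{1\bar2}-\omega'^{2\bar1}$ expanded with $\omega'^2 = B\,\omega^1 + c\,e^{i\theta}\omega^2$ to contain no $\omega^{1\bar1}$ term, since \eqref{h19complexreducido} for $\epsilon=0$ forbids it; $\omega'^{1\bar2}-\omega'^{2\bar1} = e^{i\theta}\omega^1\wedge(\bar B\,\omega^{\bar1}+c\,e^{-i\theta}\omega^{\bar2}) - (B\,\omega^1+c\,e^{i\theta}\omega^2)\wedge e^{-i\theta}\omega^{\bar1}$, whose $\omega^{1\bar1}$ coefficient is $e^{i\theta}\bar B - B\,e^{-i\theta} = 2i\,\Im(e^{i\theta}\bar B)$, which must vanish. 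That is exactly the stated condition, and with it everything is consistent. Conversely, any such transformation with $c\in\mathbb{R}-\{0\}$, $\theta\in[0,2\pi)$, $B\in\mathbb{C}$ with $\Im(e^{i\theta}\bar B)=0$ visibly preserves \eqref{h19complexreducido} by reversing the computation.

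I expect the main obstacle to be bookkeeping: keeping track of which coefficients of which independent $(2,0)$-, $(1,1)$-, and (vanishing) $(0,2)$-forms must match, and in particular correctly deriving the single real constraint $\Im(e^{i\theta}\bar B)=0$ from the prohibition of an $\omega^{1\bar1}$-term in $d\omega'^3$ — the rest is a direct though slightly tedious linear-algebra elimination showing $m_{12}=m_{13}=m_{23}=m_{32}=0$, $m_{33}=c\in\mathbb{R}-\{0\}$, $m_{11}=e^{i\theta}$, $m_{22}=c\,e^{i\theta}$, $m_{21}=B$ free subject to that one condition. I would present it as: (i) solve $d\omega'^1=0$; (ii) solve the equation for $d\omega'^3$, extracting $m_{23}=m_{32}=0$, $m_{33}\in\mathbb{R}$, $m_{11}\overline{m_{22}}=m_{33}$, and the condition $\Im(e^{i\theta}\bar B)=0$; (iii) solve the equation for $d\omega'^2$, getting $m_{22}=m_{11}m_{33}$ and hence $|m_{11}|=1$; (iv) conclude, and note the converse is immediate.
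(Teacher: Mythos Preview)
Your approach is the same as the paper's --- parametrize the change of basis by $(m_{ij})\in\mathrm{GL}(3,\mathbb{C})$ and match coefficients --- and most of your elimination is correct. However, there is a genuine gap in your treatment of $m_{31}$. You check only how $m_{31}$ enters the \emph{left} side $d\omega'^3=m_{31}\,d\omega^1+\cdots$, observe that $d\omega^1=0$, and conclude that $m_{31}$ is ``genuinely unconstrained by the differential equations''. This is wrong: $m_{31}$ also enters the \emph{right} side of the equation for $d\omega'^2$. Indeed, with $\omega'^3=m_{31}\omega^1+m_{33}\omega^3$ (you already have $m_{32}=0$) one has
\[
\omega'^{1\bar 3}=m_{11}\,\omega^1\wedge(\overline{m_{31}}\,\omega^{\bar 1}+\overline{m_{33}}\,\omega^{\bar 3})
=m_{11}\overline{m_{31}}\,\omega^{1\bar 1}+m_{11}\overline{m_{33}}\,\omega^{1\bar 3},
\]
so the requirement $d\omega'^2=\omega'^{13}+\omega'^{1\bar 3}$ has an $\omega^{1\bar 1}$ coefficient $m_{11}\overline{m_{31}}$ on the right and $0$ on the left, forcing $m_{31}=0$. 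In your step (iii) you silently computed $\omega'^{1\bar 3}$ as if $\omega'^3=m_{33}\omega^3$, which is exactly what still needed proving; once you include the missing term the constraint drops out and the Lemma's form $\omega'^3=c\,\omega^3$ is forced, not merely a normalization.

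With this correction your argument coincides with the paper's: $m_{12}=m_{13}=m_{23}=m_{31}=m_{32}=0$, $m_{33}\in\mathbb{R}\setminus\{0\}$, $m_{11}m_{33}=m_{22}$, $m_{11}\overline{m_{22}}=m_{33}$ (whence $|m_{11}|=1$), and $\Im(m_{11}\overline{m_{21}})=0$. Your derivation of the last condition from the $\omega^{1\bar 1}$ coefficient of $\omega'^{1\bar 2}-\omega'^{2\bar 1}$ is correct.
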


\begin{proof}
It is immediate to check that any basis $\{\omega'^{j}\}_{j=1}^3$
satisfies the equations \eqref{h19complexreducido} if
$\{\omega^j\}_{j=1}^3$ does. On the other hand, consider
$\{\omega^j\}_{j=1}^3$ and $\{\omega'^{j}\}_{j=1}^3$
satisfying~\eqref{h19complexreducido} for the same choice of sign.
If we write $\omega'^{i}=m_{i1}\,\omega^1 + m_{i2}\,\omega^2 +
m_{i3}\,\omega^3$, for $i=1,2,3$, with $(m_{ij})\in {\rm
GL}(3,\mathbb C)$, then it is easy to see that
$d\omega'^{i}=m_{i1}\,d\omega^1 + m_{i2}\,d\omega^2 +
m_{i3}\,d\omega^3$ is equivalent to
$$m_{12}=m_{13}=m_{23}=m_{31}=m_{32}=0,\quad \Im (m_{11}\overline{m_{21}})=0,$$
$$m_{33}\in \mathbb{R}-\{0\},\quad m_{11}m_{33}=m_{22}
,\quad m_{11}\overline{m_{22}}=m_{33}.$$  The last two conditions
imply $|m_{11}|^2=1$, so $m_{11}=e^{i\theta}$. If we put $m_{33}=c$,
then $m_{22}=c\,e^{i\theta}$ and finally,
$\Im(e^{i\theta}\,\overline{m_{21}})=0$.
\end{proof}

\begin{thrm}\label{families-balanced-h19}
Let $(J,F)$ be a balanced Hermitian structure on $\frn$ for
$J=J_0^{\pm}$. Then, in the conditions of {\rm
Proposition~\ref{unique-balanced}}, the fundamental form $F$ is
equivalent to one and only one form in the following families:

\medskip

{\rm Family I:} \quad
$2F=i(r^2\,\omega^{1\bar1}+s^2\,\omega^{2\bar2}+\omega^{3\bar3}),\quad
r,s\not=0$;

\medskip

{\rm Family II:} \ \
$2F=i(r^2\,\omega^{1\bar1}+s^2\,\omega^{2\bar2}+t^2\,
\omega^{3\bar3})+\omega^{2\bar3}-\omega^{3\bar2},\quad r\not= 0,
\quad s^2t^2>1$.
\end{thrm}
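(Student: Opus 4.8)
The plan is to start from the general balanced fundamental form described in Proposition~\ref{unique-balanced}, namely \eqref{h19fundform} subject to the constraints \eqref{balanced_condition_h19}, and then run the automorphism group from Lemma~\ref{cambioestructurah19} over it to normalize the coefficients. Writing $u = i\beta$ with $\beta \in \mathbb{R}$ (forced by $u+\bar u = 0$), the free data is $(r^2, s^2, t^2, \beta, v) \in \mathbb{R}_{>0}^3 \times \mathbb{R} \times \mathbb{C}$, with $z = -iuv/s^2 = \beta v/s^2$ determined, and subject to the positive-definiteness inequalities recorded after \eqref{metric}. Under $\{\omega'^{1}=e^{i\theta}\omega^1,\ \omega'^{2}=B\omega^1+c\,e^{i\theta}\omega^2,\ \omega'^{3}=c\,\omega^3\}$ I would compute how $(r^2,s^2,t^2,\beta,v)$ transform: $t^2 \mapsto c^2 t^2$, $s^2 \mapsto$ (something involving $c^2 s^2$, $|B|^2 r^2$ and cross terms with $\beta$), $r^2$ picks up a factor $1$ from $|e^{i\theta}|=1$ but mixes with $B$, and crucially the coefficient of $\omega^{1\bar 2}$ and of $\omega^{2\bar 3}$ change. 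The key observation to exploit is that $B$ is essentially a free complex parameter (subject only to the reality condition $\Im(e^{i\theta}\bar B)=0$, i.e. $B$ lies on a real line determined by $\theta$), so it can be used to kill the $\omega^{1\bar 2}$-component, i.e. to arrange $\beta = 0$; then automatically $z = 0$ as well.

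Next I would treat the two resulting cases. Once $\beta = 0$ (hence $z=0$), the form is $2F = i(r^2\omega^{1\bar 1} + s^2\omega^{2\bar 2} + t^2\omega^{3\bar 3}) + v\,\omega^{2\bar 3} - \bar v\,\omega^{3\bar 2}$. If $v = 0$ we are already in Family~I after rescaling $\omega^3$ by a real scalar $c$ to set $t^2 = 1$ (using $t^2 \mapsto c^2 t^2$); the remaining parameters $r^2, s^2 > 0$ are genuine invariants. If $v \neq 0$, I would use the phase $\theta$ (which rotates $\omega^1$, but note $\omega^3$ is only scaled by the real $c$, so $v$'s phase is not directly removable by $\theta$ acting on $\omega^3$)—more carefully, I'd check which transformations act on $v$: with $B$ already fixed to kill $\beta$, and $\omega'^3 = c\,\omega^3$, $\omega'^2 = c e^{i\theta}\omega^2$ (taking $B$'s contribution into account), the coefficient $v \mapsto c^2 e^{-i\theta} v$ roughly, so one can rotate $v$ to be real positive and scale it to $1$ by choosing $|c|$ appropriately; this forces a specific value of $c^2$, which then fixes $t^2 \mapsto c^2 t^2$ and $s^2$, leaving the single relation $s^2 t^2 > 1$ as the surviving constraint from positive-definiteness (the inequality $s^2 t^2 > |v|^2$ with $|v|=1$). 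That gives Family~II. The main technical obstacle is bookkeeping: carefully tracking how all five coefficients transform simultaneously under the three-parameter automorphism group, and verifying that after using $B$ to set $\beta=0$ there is still enough freedom ($\theta$ and $c$) to normalize $v$ and $t^2$ in the $v\neq 0$ case, or $t^2$ in the $v=0$ case, without reintroducing a nonzero $\omega^{1\bar 2}$-term.

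Finally I would prove the "one and only one" (uniqueness) part: I must show Family~I and Family~II are disjoint and that within each family the displayed parameters are complete invariants. Disjointness is clear because any $F$ equivalent to a Family~II representative has $F\wedge\bar\partial(\text{something})$—more simply, the invariant "does $F$ have nonzero $\omega^{2\bar 3}$-component in \emph{every} adapted $(1,0)$-basis of the form \eqref{h19complexreducido}" distinguishes them: in Family~I the pair $(\omega^1,\omega^2,\omega^3)$ can be chosen with $v=0$, and I would argue that the vanishing of $v$ (given $\beta=0, z=0$) is forced, whereas in Family~II one checks $v$ cannot be removed because the only transformations preserving \eqref{h19complexreducido} scale $\omega^3$ by reals and cannot cancel the $\omega^{2\bar 3}$ term against anything. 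For within-family uniqueness: in Family~I, starting from $2F = i(r^2\omega^{1\bar 1}+s^2\omega^{2\bar 2}+\omega^{3\bar 3})$ and applying the automorphism group, one finds $B$ must vanish (else a $\omega^{1\bar 2}$ term appears), $c^2 = 1$ (to keep the $\omega^{3\bar 3}$-coefficient equal to $1$), and $\theta$ acts trivially on $r^2, s^2$ (since $|e^{i\theta}|=1$), so $(r^2,s^2)$ is rigid; similarly in Family~II the normalization $t^2\mapsto c^2 t^2 = t^2$ forces $c^2=1$, $B=0$, $e^{-i\theta}$ must fix the real value $1$ of the $\omega^{2\bar 3}$-coefficient so $\theta \in \{0,\pi\}$ acting trivially on $(r^2, s^2, t^2)$. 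Hence the parameters are invariants, completing the classification.
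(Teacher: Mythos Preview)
Your approach is essentially the same as the paper's: both run the automorphism group of Lemma~\ref{cambioestructurah19} over the general balanced form of Proposition~\ref{unique-balanced}, use the $B$-freedom to kill $u$ (hence $z$), case-split on $v=0$ versus $v\neq 0$, and then check uniqueness by showing the remaining automorphisms cannot alter the normal-form parameters. The paper simply combines your two normalization steps into a single change of basis per case, but the content is identical.

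One small logical slip to fix in your Family~II uniqueness sketch: you write ``$t^2\mapsto c^2 t^2 = t^2$ forces $c^2=1$,'' but this presumes the invariance of $t^2$ you are trying to establish. The correct order (which the paper uses) is: first $B=0$ from preservation of the vanishing $\omega^{1\bar 2}$-coefficient; then the condition on the $\omega^{2\bar 3}$-coefficient reads $\lambda^2 e^{i\theta}=1$ with $\lambda^2>0$ real, forcing $e^{i\theta}=1$ and $\lambda^2=1$ simultaneously (so $\theta=0$, not $\theta\in\{0,\pi\}$); only then do you conclude $r^2,s^2,t^2$ are preserved. With that reordering your argument goes through.
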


\begin{proof}
Let $\{\sigma^j\}^3_{j=1}$ be a $(1,0)$-basis
satisfying~\eqref{h19complexreducido} and let us consider a general
compatible metric~\eqref{metric} satisfying the balanced
conditions~\eqref{balanced_condition_h19}. Let us write $v$ as
$v=-c^2\,e^{i\theta}=c^2\,e^{i(\theta-\pi)}$. If $v=0$ then taking
$\{\omega^1=-\sigma^1,\,\omega^2=
t\,\left(-\frac{ui}{s^2}\,\sigma^1+\sigma^2\right),\,\omega^3=-t\,\sigma^3\}$
we obtain a balanced structure in Family II. On the other hand, if
$v\neq 0$ then the basis
$\{\omega^1=-e^{i\theta}\,\sigma^1,\,\omega^2=ce^{i\theta}(-\frac{ui}{s^2}\,\sigma^1+\sigma^2),\,
\omega^3=-c\,\sigma^3\}$ keeps~\eqref{h19complexreducido} by
Lemma~\ref{cambioestructurah19} (observe that
$\Im\left(\frac{iu}{s^2}\right)=0$ because $u\in i \mathbb{R}$) and
provides a balanced structure in Family I, concretely
$$2F=i\left(\frac{r^2s^2-|u|^2}{s^2}\,\omega^{1\bar1} + \frac{s^2}{c^2}\,\omega^{2\bar2}
+ \frac{t^2}{c^2}\,\omega^{3\bar3}\right) +
\omega^{2\bar3}-\omega^{3\bar2}.$$

Finally, we see that the structures in Families I and II are not
equivalent. Consider the pairs
$$\{\omega^1,\,\omega^2,\,\omega^3\},\quad
F_{\omega}=i\,(r^2\,\omega^{1\bar1}+s^2\,\omega^{2\bar2}+t^2\,\omega^{3\bar3})
+ \varepsilon_1\,(\omega^{2\bar3}-\omega^{3\bar2}),$$ and
$$\{\sigma^1,\,\sigma^2,\,\sigma^3\},\quad
F_{\sigma}=i\,(a^2\,\sigma^{1\bar1}+b^2\,\sigma^{2\bar2}+c^2\,\sigma^{3\bar3})
+ \varepsilon_2\,(\sigma^{2\bar3}-\sigma^{3\bar2}),$$ where
$\{\omega^j\}_{j=1}^3$ and $\{\sigma^j\}_{j=1}^3$ are $(1,0)$-basis
satisfying \eqref{h19complexreducido} for the same complex
structure. Let us suppose that there exists an equivalence between
them. By Lemma~\ref{cambioestructurah19},
$\omega^1=e^{i\theta}\,\sigma^1$, $\omega^2=B\sigma^1 + \lambda
e^{i\theta}\,\sigma^2$ and $\omega^3=\lambda\,\sigma^3$ for some
$\lambda\in \mathbb{R}-\{0\},\,\theta\in [0,2\pi),\,B\in \mathbb{C}$
such that $\Im(e^{i\theta} \bar B)=0$. Then, the conditions that
transform $F_{\omega}$ into $F_{\sigma}$ are:
$$i(\lambda\,\bar B\,e^{i\theta})=\varepsilon_1\,B\lambda=0,\quad\
\varepsilon_1\,\lambda^2\,e^{i\theta}=\varepsilon_2,\quad\
r^2+s^2|B|^2 = a^2,\quad\  s^2\lambda^2=b^2,\quad\
t^2\lambda^2=c^2.$$ From the first relation we have that $B=0$ and,
in consequence, $r^2=a^2$. Now, if $\varepsilon_1=\varepsilon_2=1$,
then $\lambda^2=e^{i\theta}=1$ and therefore, $s^2=b^2$ and
$t^2=c^2$. If $\varepsilon_1=\varepsilon_2=0$ and $t^2=c^2=1$, then
$\lambda^2=1$ and again $s^2=b^2$. Finally, if $\varepsilon_1=1$ and
$\varepsilon_2=0,\, c^2=1$, then $\lambda^2 e^{i\theta}=0$ which is
not possible.
\end{proof}

\section{Heterotic supersymmetry with constant
dilaton}\label{het-constant-dil}

\noindent Conformally balanced Hermitian structures are a key
ingredient in the solutions of the Strominger system of equations in
heterotic string theory~\cite{Str}. Before writing the system
explicitly, we first recall that any Hermitian structure $(J,F)$ on
a $2n$-dimensional manifold $M$ has a unique Hermitian connection
with torsion $T$ given by $g(X,T(Y,Z))=JdF(X,Y,Z)=-dF(JX,JY,JZ)$,
$g$ being the associated metric~\cite{Bis}. This torsion connection
is known as the {\it Bismut connection} of $(J,F)$ and will be
denoted here by $\nabla^+$. From now on, we shall identify $T$ with
the 3-form $JdF$. In relation to the Levi-Civita connection
$\nabla^g$ of the Riemannian metric $g$, the Bismut connection is
determined by $\nabla^+ = \nabla^g +\frac{1}{2}T$. On the other
hand, let $\theta$ be the {\it Lee form} associated to the Hermitian
structure $(J,F)$, that is, $\theta=\frac{1}{1-n} J\delta F$, where
$\delta$ denotes the formal adjoint of $d$ with respect to the
associated metric $g$. The 1-form $\theta$ vanishes identically if
and only if the form $F^{n-1}$ is closed, i.e. the Hermitian
structure is balanced.

In addition to a conformally balanced Hermitian structure, the
Strominger system also requires $M$ to be a manifold of complex
dimension 3 endowed with a non-vanishing holomorphic (3,0)-form
$\Psi$. Therefore, one is led to consider certain special Hermitian
SU(3)-structures $(J,F,\Psi)$ in six dimensions \cite{CS}.

Here we are interested in finding explicit compact solutions of the
heterotic supersymmetry equations with non-zero flux $H=T$. More
concretely, in the heterotic theory of superstrings with fluxes one
looks for a compact 6-manifold $M$ endowed with a Hermitian
SU(3)-structure $(J,F,\Psi)$ satisfying the following system of
equations \cite{Str}:
\begin{enumerate}
\item[(a)] Gravitino equation: the holonomy of the Bismut connection $\nabla^+$ is contained in
SU(3).
\item[(b)] Dilatino equation: the Lee form $\theta$ is exact; moreover $\theta=2d\phi$, where
$\phi$ is the dilaton function.
\item[(c)] Gaugino equation: there is a Donaldson-Uhlenbeck-Yau SU($3$)-instanton; that is to
say, a Hermitian vector bundle of rank $r$ over $M$ equiped with an
SU($3$)-instanton, i.e. a connection $A$ with curvature 2-form
$\Omega^A\in \frs\fru(3)$.
\item[(d)] Anomaly cancellation condition:
$$dT=2\pi^2 \alpha' \Big(p_1(\nabla)-p_1(A)\Big), \qquad
\alpha'>0.$$
\end{enumerate}

Here $p_1$ denotes the 4-form representing the first Pontrjagin
class of the connection, which is given in terms of the curvature
forms $\Omega^i_j$ of the connection by $p_1= {1\over 8\pi^2} {\rm
tr}\, \Omega\wedge\Omega = {1\over 8\pi^2} \sum_{1\leq i<j\leq 6}
\Omega^i_j\wedge\Omega^i_j$. The requirement $\alpha'>0$ in the
anomaly cancellation condition is due to physical considerations.
Different connections $\nabla$ in (d) correspond to different
regularization schemes in the two-dimensional worldsheet non-linear
sigma model. In \cite{y4,y3,Str} the Chern connection $\nabla^c$ is
proposed to investigate the anomaly cancellation.

As an application of the results given in
Section~\ref{complex-sect}, next we find solutions to the system
above in the case when the dilaton is constant, which implies that
the structure $(J,g)$ is balanced Hermitian, and satisfying the
anomaly cancellation condition with curvature term taken with
respect to the Chern connection, i.e.~$\nabla=\nabla^c$.

We focus on six-dimensional nilmanifolds $M=\Gamma\backslash G$
endowed with an invariant non-nilpotent complex structure $J$. As it
is explained in the introduction, this is the appropriate class of
structures to look for solutions of the Strominger system with
respect the Chern connection in the anomaly cancellation condition.
According to \cite[Proposition~6.1]{FPS}, for invariant Hermitian
metrics on $M$ the balanced condition is equivalent to the reduction
of the holonomy group of the associated Bismut connection $\nabla^+$
to a subgroup of SU(3), i.e. the conditions (a) and (b) in the
system above are equivalent. Also, notice that given an invariant
Hermitian structure on a nilmanifold any invariant (3,0)-form $\Psi$
is closed \cite{S}.

Given a Hermitian structure $(J,F)$, we say that a (real) coframe
$\{e^1,\ldots,e^6\}$ is an {\it adapted basis} for~$(J,F)$ if
$Je^1=-e^2, Je^3=-e^4, Je^5=-e^6$ and $F=e^{12}+e^{34}+e^{56}$. The
associated metric $g$ expresses in terms of this basis by
$g=e^1\otimes e^1+\cdots+ e^6\otimes e^6$, i.e. $\{e^1,\ldots,e^6\}$
is orthonormal. In the subsequent sections we shall always consider
an adapted basis and the (3,0)-form $\Psi$ defining the
SU(3)-structure as
$\Psi=(e^1+ie^2)\wedge(e^3+ie^4)\wedge(e^5+ie^6)$.

There is a clear advantage when working with adapted bases. For
instance, in the gaugino equation the curvature 2-form $\Omega^A\in
{\frak s}{\frak u}(3)$ if and only if
\begin{equation}\label{inst}
(\Omega^A)^i_j(e_1,e_2)+(\Omega^A)^i_j(e_3,e_4)+(\Omega^A)^i_j(e_5,e_6)=0,\quad
(\Omega^A)^i_j(Je_k,Je_l)=(\Omega^A)^i_j(e_k,e_l),\ \ \forall\
i,j,k,l,
\end{equation}
where $\{e_1,\ldots,e_6\}$ is the dual basis of a basis
$\{e^1,\ldots,e^6\}$ adapted to the SU(3)-structure. Moreover, since
we are working with invariant structures, the adapted bases are
globally defined on the nilmanifold $M$ because they stem from the
corresponding Lie group $G$ (actually, from the Lie algebra $\frg$
of $G$) by passing to the quotient.

In general, for any 6-dimensional Lie group, let us consider the
structure equations with respect to a basis of left-invariant
1-forms:
$$
d\, e^k = \sum_{1\leq i<j \leq 6} a_{ij}^k \, e^{i j},\quad\quad
k=1,\ldots,6.
$$
Let $g=e^1\otimes e^1 + \cdots +e^6\otimes e^6$ be the Riemannian
metric for which the basis $\{e^k\}^6_{k=1}$ is orthonormal, and
denote by $\{e_1,\ldots,e_6\}$ the dual basis.

Given a linear connection $\nabla$, the connection 1-forms
$\sigma^i_j$ with respect to the above basis are
$$
\sigma^i_j(e_k) = g(\nabla_{e_k}e_j,e_i),
$$
i.e. $\nabla_X e_j = \sigma^1_j(X)\, e_1 +\cdots+ \sigma^6_j(X)\,
e_6$. The curvature 2-forms $\Omega^i_j$ of $\nabla$ are given in
terms of the connection 1-forms $\sigma^i_j$ by
\begin{equation}\label{curvature}
\Omega^i_j  = d \sigma^i_j + \sum_{1\leq k \leq 6}
\sigma^i_k\wedge\sigma^k_j
\end{equation}
and the first Pontrjagin class is represented by the 4-form
$p_1(\nabla)={1\over 8\pi^2} \sum_{1\leq i<j\leq 6}
\Omega^i_j\wedge\Omega^i_j$.

Now, if $J$ is a complex structure compatible with $g$ then, in
addition to the Bismut connection $\nabla^+=\nabla^g + \frac12 T$,
we can also consider the Chern connection $\nabla^c$, which is
defined by $\nabla^c= \nabla^g + \frac12 C$ with
$C(\cdot,\cdot,\cdot)=dF(J\cdot,\cdot,\cdot)$, that is
$$
g(\nabla^c_XY,Z)= g(\nabla^g_XY,Z) + \frac12 C(X,Y,Z), \qquad
C(X,Y,Z)=dF(JX,Y,Z)=-T(X,JY,JZ).
$$
Observe that $C(X,\cdot,\cdot)=(JX\lrcorner dF)(\cdot,\cdot)$ is a
2-form and $C(\cdot,Y,Z)=(J\cdot\lrcorner dF)(Y,Z)$ a 1-form.

Since $d e^k(e_i,e_j)= -e^k([e_i,e_j])$, the Levi-Civita connection
1-forms $(\sigma^g)^i_j$ of the metric $g$ express in terms of the
structure constants by
$$
(\sigma^g)^i_j(e_k) = -\frac12(g(e_i,[e_j,e_k]) - g(e_k,[e_i,e_j]) +
g(e_j,[e_k,e_i]))=\frac12(a^i_{jk}-a^k_{ij}+a^j_{ki}).
$$
Therefore, the connection 1-forms $(\sigma^c)^i_j$ for the Chern
connection $\nabla^c$ are determined by
\begin{equation}\label{chern-1-forms}
(\sigma^c)^i_j(e_k)=(\sigma^g)^i_j(e_k) - \frac12 C^i_j(e_k) =
\frac12(a^i_{jk}-a^k_{ij}+a^j_{ki}) - \frac12 C^i_j(e_k),
\end{equation}
where $C^i_j$ are the torsion 1-forms defined by
\begin{equation}\label{torsion-chern-1-forms}
C^i_j(e_k) =C(e_k,e_i,e_j)= dF(Je_k,e_i,e_j).
\end{equation}

\subsection{Adapted basis for balanced Hermitian
structures in Families I and II}\label{adapted-bases}

In this section we find an adapted basis for the balanced Hermitian
structures given in Families~I and~II of
Theorem~\ref{families-balanced-h19}.

To get an orthonormal coframe $\{e^1,\ldots,e^6\}$ with respect to a
metric in Family I it is enough to consider
\begin{equation}\label{es-family-I}
e^1+i\,e^2=r\,\omega^1,\quad e^3+i\,e^4=s\,\omega^2,\quad
e^5+i\,e^6=\omega^3.
\end{equation}
In terms of this basis, the structure equations become
\begin{equation}\label{str-eq-Family-I}
\begin{cases}
\begin{array}{lcl}
de^1 \zzz & = &\zzz de^2=de^5=0,\\
de^3 \zzz & = &\zzz \frac{2s}{r}\,e^{15},\\[2pt]  de^4 \zzz & = &\zzz
\frac{2s}{r}\,e^{25},\\[2pt] de^6 \zzz & = &\zzz
\pm\frac{2}{rs}\,(e^{13}+e^{24}),
\end{array}
\end{cases}
\end{equation} where the $(+)$-sign, resp. $(-)$-sign, in $de^6$ corresponds to
the complex structure $J_0^+$, resp. $J_0^-$.

The complex structures $J_0^+$ and $J_0^-$ are then given by $J_0^{\pm}
e^1=-e^2, J_0^{\pm} e^3=-e^4, J_0^{\pm} e^5=-e^6$, which means that the real
basis $\{e^1,\ldots,e^6\}$ is $J_0^{\pm}$-adapted. Therefore, the
fundamental form $F$ associated with the $J_0^{\pm}$-Hermitian metric
$g=e^1\otimes e^1+\cdots+ e^6\otimes e^6$ is given by
$F=e^{12}+e^{34}+e^{56}$. The structure
equations~\eqref{str-eq-Family-I} imply that
\begin{equation}\label{dF-torsion-h19-ii}
dF=\frac{-2s}{r}\,e^{145} + \frac{2s}{r}\,e^{235}\mp
\frac{2}{rs}(e^{135}+e^{245}).
\end{equation}

For Family II, we first consider the $(1,0)$-basis
$\{\sigma^j\}^3_{j=1}$ given by
$$
\sigma^1 = \omega^1,\quad\quad \sigma^2 = \omega^{2} +
\frac{i}{2\,p^2}\, \omega^{3} \, ,\quad\quad \sigma^3 =
i\,\omega^{2} - \frac{1}{2\,q^2}\, \omega^3,
$$
where $p^2$ and $q^2$ are the real, positive and different roots of
the polynomial $P(X)=t^2X^2 - s^2t^2X + \frac{s^2}{4}$. In
particular,
$$q^2=\frac{s^2t^2+\sqrt{s^2t^2\,(s^2t^2-1)}}{2\,t^2}>0,\quad p^2+q^2=s^2>0,
\quad p^2q^2=\frac{s^2}{4t^2}.$$ In terms of $\{\sigma^j\}^3_{j=1}$
the fundamental form expresses as
$2F=i\,(r^2\,\sigma^{1\bar1}+p^2\,\sigma^{2\bar2}+q^2\,\sigma^{3\bar3})$.
Now, the basis $\{e^1,\ldots,e^6\}$ given by
\begin{equation}\label{es-family-II}
e^1+i\,e^2=r\,\sigma^1,\quad\quad e^3+i\,e^4=p\,\sigma^2,\quad\quad
e^5+i\,e^6=q\,\sigma^3,
\end{equation}
is an adapted basis and the corresponding real structure equations
are:
\begin{equation}\label{str-eq-Family-II} \begin{cases}
\begin{array}{lcl}
de^1 \zzz & = &\zzz de^2=0,\\[4pt] de^3
\zzz & = &\zzz\frac{p}{r\,(p^2-q^2)}\,
\left[\mp\frac{1}{p}\,(e^{13}+e^{24})\pm\frac{q}{p^2}\,
(e^{16}-e^{25}) - 4pq\,(q\,e^{14}+p\,e^{15})\right],
\\[4pt] de^4 \zzz & = &\zzz\frac{-4 p^2 q}{r\,(p^2-q^2)}\,\left[q\,e^{24}+p\,e^{25}\right],
\\[4pt] de^5\zzz & = &\zzz\frac{4 p q^2}{r\,(p^2-q^2)}\,\left[q\,e^{24}+p\,e^{25}\right],
\\[4pt] de^6 \zzz & = &\zzz\frac{q}{r\,(p^2-q^2)}\,
\left[\mp\frac{p}{q^2}\,(e^{13}+e^{24})\pm\frac{1}{q}\,(e^{16}-e^{25})
-4 p q\,(q\,e^{14}+p\,e^{15})\right].
\end{array}
\end{cases}
\end{equation}
Taking into account \eqref{str-eq-Family-II}, $dF$ is expressed as:
\begin{equation}\label{dF-torsion-h19-i}\begin{array}{ll}
dF=\frac{1}{r\,(p^2-q^2)}&\Big[\mp\,(e^{134}-e^{156}) + 4 p q
(p^2+q^2)\,e^{145} \pm\frac{p}{q}\,(e^{135}+e^{245})\\
&\mp\frac{q}{p}\,(e^{146}-e^{245}) - 4 p^2 q^2\,(e^{234}-e^{256}) -4
p q\,(p^2\,e^{235} - q^2\,e^{246})\Big].\end{array}
\end{equation}

\section{Heterotic string compactifications based on non-nilpotent complex
structures}\label{soluciones}

\noindent In this section we find explicit solutions of the
heterotic supersymmetry equations with non-zero flux and constant
dilaton with respect to the Chern connection in the anomaly
cancellation condition based on invariant complex structures on
nilmanifolds. From the considerations in
Section~\ref{het-constant-dil} we are led to non-nilpotent complex
structures admitting balanced compatible metric, i.e. to a compact
nilmanifold $N$ corresponding to the Lie algebra
$\frn=(0,0,0,12,23,14-35)$. In Section~\ref{explicit} below we
provide an explicit realization of $N$ as a compact quotient
$\Gamma\backslash K$ of a simply connected nilpotent Lie group $K$
by a lattice $\Gamma$ of maximal rank.

In Section~\ref{complex-sect} we have proved that any invariant
balanced Hermitian structure on $N$ is equivalent to $(J_0^{\pm},F)$,
where $F$ belongs to the Family I or II of
Theorem~\ref{families-balanced-h19}. Next we look for solutions of
the Strominger system in each family separately.

\subsection{Solutions in Family I}\label{family-I}

We consider first the Family I of balanced Hermitian
SU(3)-structures $(J_0^{\pm},F,\Psi)$ on~$N$. Since $\{e^i\}^6_{i=1}$
given in \eqref{es-family-I} is an adapted basis, from
\eqref{dF-torsion-h19-ii} it follows that the torsion $T$ is given
by
$$T=J_0^{\pm} dF=\frac{2s}{r}\,e^{146} -
\frac{2s}{r}\,e^{236}\pm \frac{2}{rs}(e^{136}+e^{246}),
$$
which implies, using the structure equations
\eqref{str-eq-Family-I}, that
$$dT=-\frac{8}{r^2}\left(\frac{1}{s^2}\,e^{1234}+s^2\,e^{1256}\right).$$

Next we find a large family of SU(3)-instantons for any structure
$(J_0^{\pm},F,\Psi)$.

\begin{prop}\label{instantons}
For each $\lambda,\mu,\tau \in \mathbb{R}$, let
$A_{\lambda,\mu,\tau}$ be the {\rm SU(3)}-connection defined by the
connection $1$-forms
$$(\sigma^{A_{\lambda,\mu,\tau}})^2_3=(\sigma^{A_{\lambda,\mu,\tau}})^2_5=
(\sigma^{A_{\lambda,\mu,\tau}})^4_5=\frac12(\sigma^{A_{\lambda,\mu,\tau}})^5_6=-\lambda\,e^1
-\mu\,e^2 - \tau\,e^6,\qquad
(\sigma^{A_{\lambda,\mu,\tau}})^i_j=\lambda\,e^1+ \mu\,e^2 +
\tau\,e^6,$$ for $1\leq i< j\leq 6$ such that $(i,j)\neq (2,3),
(2,5), (4,5), (5,6)$, and $\sigma^j_i=-\sigma^i_j$. Then,
$A_{\lambda,\mu,\tau}$ is an {\rm SU(3)}-instanton and
$$p_1(A_{\lambda,\mu,\tau})=\frac{-18\,\tau^2}{\pi^2 r^2s^2}\,e^{1234}.$$
\end{prop}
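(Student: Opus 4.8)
The plan is to verify directly that the connection $1$-forms $\sigma^{A_{\lambda,\mu,\tau}}$ define an $\mathfrak{su}(3)$-valued connection and that its curvature satisfies the instanton condition \eqref{inst}, and then to compute the relevant products of curvature forms. First I would check that $A_{\lambda,\mu,\tau}$ lands in $\mathfrak{su}(3)$ rather than merely $\mathfrak{so}(6)$: the antisymmetry $\sigma^j_i=-\sigma^i_j$ guarantees $\mathfrak{so}(6)$, and one should identify $\mathbb{R}^6$ with $\mathbb{C}^3$ via the adapted basis (so that $J_0^\pm e^1=-e^2$, etc.) and verify that the matrix $(\sigma^i_j)$ commutes with $J_0^\pm$ and is trace-free as a complex matrix. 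The specific pattern of signs — all entries equal to $\pm(\lambda e^1+\mu e^2+\tau e^6)$ with the four exceptional entries $(2,3),(2,5),(4,5),(5,6)$ carrying the opposite sign, and the extra factor $2$ on $(5,6)$ — is exactly what is needed for the complex linearity; the relation $\sigma^5_6=2\sigma^2_3$ reflects that in the complex picture one entry is $2i$ times a $\mathbb C$-linear $1$-form. This is a finite linear-algebra check on a $6\times6$ matrix of $1$-forms.

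Next I would compute the curvature via \eqref{curvature}, $\Omega^i_j = d\sigma^i_j + \sum_k \sigma^i_k\wedge\sigma^k_j$. Write $\beta=\lambda e^1+\mu e^2+\tau e^6$, so every $\sigma^i_j$ is $\pm\beta$. Then $d\sigma^i_j=\pm d\beta$, and using the structure equations \eqref{str-eq-Family-I} one finds $d\beta = \tau\,de^6 = \pm\tfrac{2\tau}{rs}(e^{13}+e^{24})$ (the $e^1,e^2$ terms being closed). For the quadratic term $\sum_k\sigma^i_k\wedge\sigma^k_j$, each summand is $(\pm\beta)\wedge(\pm\beta)$ with a sign depending only on whether $(i,k)$ and $(k,j)$ are exceptional pairs; since $\beta\wedge\beta=0$, every such summand vanishes. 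Hence $\Omega^i_j=\pm d\beta = \pm\tfrac{2\tau}{rs}(e^{13}+e^{24})$ for all $i<j$, up to sign. One then checks \eqref{inst}: the form $e^{13}+e^{24}$ evaluated on $(e_1,e_2),(e_3,e_4),(e_5,e_6)$ gives $0+0+0=0$, and it is $J_0^\pm$-invariant since $J_0^\pm(e^1\wedge e^3+e^2\wedge e^4)=e^2\wedge e^4+e^1\wedge e^3$ (using $J_0^\pm$ on $1$-forms); so $A_{\lambda,\mu,\tau}$ is indeed an SU(3)-instanton.

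Finally, to get $p_1(A_{\lambda,\mu,\tau})=\tfrac{1}{8\pi^2}\sum_{i<j}\Omega^i_j\wedge\Omega^i_j$, note each summand equals $\bigl(\tfrac{2\tau}{rs}\bigr)^2(e^{13}+e^{24})\wedge(e^{13}+e^{24}) = \tfrac{4\tau^2}{r^2s^2}\cdot 2\,e^{1234} = \tfrac{8\tau^2}{r^2s^2}\,e^{1234}$, independent of the sign $\pm$. There are $\binom{6}{2}=15$ pairs $(i,j)$ with $i<j$, so $\sum_{i<j}\Omega^i_j\wedge\Omega^i_j = 15\cdot\tfrac{8\tau^2}{r^2s^2}e^{1234} = \tfrac{120\tau^2}{r^2s^2}e^{1234}$, and dividing by $8\pi^2$ yields $p_1(A_{\lambda,\mu,\tau}) = \tfrac{15\tau^2}{\pi^2 r^2 s^2}e^{1234}$. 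The stated coefficient is $-18/(\pi^2r^2s^2)$, so I expect the actual bookkeeping to differ from this crude count: presumably some exceptional pairs contribute $\Omega^i_j$ with a relative coefficient $2$ (the $(5,6)$ entry, hence any pair involving the factor-$2$ slot), or the exceptional entries combine with a genuinely different $d\sigma^i_j$, so the weighted sum of squares is not simply $15$ times one term. The main obstacle is therefore this careful accounting of which $\Omega^i_j$ pick up the extra factor from the $(5,6)$ normalization and getting the exact integer coefficient; once the curvature forms are pinned down, everything else is immediate.
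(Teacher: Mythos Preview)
Your approach is exactly the paper's: verify the $\mathfrak{su}(3)$ identities on the connection $1$-forms, observe that all quadratic terms $\sigma^i_k\wedge\sigma^k_j$ vanish because every entry is a scalar multiple of the single $1$-form $\beta=\lambda e^1+\mu e^2+\tau e^6$, so $\Omega^i_j=d\sigma^i_j$, and then check \eqref{inst} and compute $p_1$. Two small bookkeeping points will close the gap between your $+15$ and the claimed $-18$.

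First, a sign: $e^{13}\wedge e^{24}=e^{1324}=-e^{1234}$, so
\[
(e^{13}+e^{24})\wedge(e^{13}+e^{24})=2\,e^{13}\wedge e^{24}=-2\,e^{1234},
\]
and hence $d\beta\wedge d\beta=-\dfrac{8\tau^2}{r^2s^2}\,e^{1234}$, not $+\dfrac{8\tau^2}{r^2s^2}\,e^{1234}$.

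Second, the factor $2$ on the $(5,6)$ slot propagates to the curvature in the most naive way: since all quadratic terms vanish, $\Omega^5_6=d\sigma^5_6=-2\,d\beta$, while every other $\Omega^i_j=\pm d\beta$. (Only the pair $(5,6)$ itself is special; pairs merely \emph{involving} the index $5$ or $6$ still have $\sigma^i_j=\pm\beta$.) Thus
\[
\sum_{i<j}\Omega^i_j\wedge\Omega^i_j=(14+4)\,d\beta\wedge d\beta=18\,d\beta\wedge d\beta
=-\frac{144\,\tau^2}{r^2s^2}\,e^{1234},
\]
and dividing by $8\pi^2$ gives $p_1(A_{\lambda,\mu,\tau})=-\dfrac{18\,\tau^2}{\pi^2 r^2 s^2}\,e^{1234}$, as stated. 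So your outline is complete once you correct the sign of the wedge-square and note that only the single entry $\Omega^5_6$ carries the extra factor~$2$.
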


\begin{proof}
Since $\{e^1,\ldots,e^6\}$ is an adapted basis for the
SU(3)-structure and the connection 1-forms with respect to this
basis satisfy $\sigma^j_i=-\sigma^i_j$ and
$$\sigma^1_3=\sigma^2_4,\quad \sigma^1_4=-\sigma^2_3,\quad
\sigma^1_5=\sigma^2_6,\quad
\sigma^1_6=-\sigma^2_5,\quad\sigma^3_5=\sigma^4_6,\quad
\sigma^3_6=-\sigma^4_5, \quad \sigma^1_2+\sigma^3_4+\sigma^5_6=0,$$
the connection $A_{\lambda,\mu,\tau}$ preserves $F$ and $\Psi$, i.e.
it is an SU(3)-connection. A direct calculation, using
\eqref{curvature} and \eqref{str-eq-Family-I}, shows that the
curvature forms $(\Omega^{A_{\lambda,\mu,\tau}})^i_j$ of the
connection $A_{\lambda,\mu,\tau}$ are given by
$$(\Omega^{A_{\lambda,\mu,\tau}})^2_3=(\Omega^{A_{\lambda,\mu,\tau}})^2_5=
(\Omega^{A_{\lambda,\mu,\tau}})^4_5=\frac12(\Omega^{A_{\lambda,\mu,\tau}})^5_6=\frac{-2\tau}{rs}\,(e^{13}+e^{24}),\qquad
(\Omega^{A_{\lambda,\mu,\tau}})^i_j=\frac{2\tau}{rs}\,(e^{13}+e^{24}).
$$
Since the 2-forms $(\Omega^{A_{\lambda,\mu,\tau}})^i_j$ satisfy
equations~\eqref{inst}, the connection $A_{\lambda,\mu,\tau}$ is an
SU(3)-instanton.
\end{proof}

In order to obtain solutions to the anomaly cancellation condition
with respect to the Chern connection, we compute first the curvature
of $\nabla^c$. According to \eqref{torsion-chern-1-forms} and
\eqref{dF-torsion-h19-ii} the torsion 1-forms $C^i_j$ of the Chern
connection are $C^1_2=C^1_6=C^2_6=C^3_4=C^3_6=C^4_6=C^5_6=0$ and
$$\begin{array}{lll}
C^1_3=C^2_4=\mp\frac{2}{rs} e^6,\qquad &
C^1_4=-C^2_3=-\frac{2s}{r} e^6,\qquad & C^1_5=-\frac{2s}{r} e^3 \pm\frac{2}{rs} e^4,\\[4pt]
C^2_5=\mp\frac{2}{rs} e^3 -\frac{2s}{r} e^4,\qquad &
C^3_5=-\frac{2s}{r} e^1 \mp\frac{2}{rs} e^2,\qquad &
C^4_5=\pm\frac{2}{rs} e^1 -\frac{2s}{r} e^2.
\end{array}$$
From \eqref{chern-1-forms} and the structure equations
\eqref{str-eq-Family-I} it follows that the non-zero Chern
connection 1-forms $(\sigma^c)^i_j$ are the following:
$$
\begin{array}{lll}
(\sigma^c)^1_3=(\sigma^c)^2_4=-\frac{s}{r}\,e^5,\quad &
(\sigma^c)^1_4=-(\sigma^c)^2_3=\frac{s}{r}\,e^6,\quad&
(\sigma^c)^1_5=(\sigma^c)^2_6=\mp\frac{1}{rs}\,e^4,\\[5pt]
(\sigma^c)^1_6=-(\sigma^c)^2_5=\mp\frac{1}{rs}\,e^3,
&(\sigma^c)^3_5=(\sigma^c)^4_6=\pm\frac{1}{rs}\,e^2,
&(\sigma^c)^3_6=-(\sigma^c)^4_5=\pm\frac{1}{rs}\,e^1.
\end{array}
$$

Now, using \eqref{curvature} we get the curvature 2-forms
$(\Omega^c)^i_j$ for the Chern connection $\nabla^c$:
\begin{eqnarray*}
(\Omega^c)^1_2&=& -\frac{2}{r^2s^2}(e^{34} +
s^4\,e^{56}),\\
(\Omega^c)^1_3&=& (\Omega^c)^2_4 = -\frac{1}{r^2s^2}(e^{13}+e^{24}),\\
(\Omega^c)^1_4&=& -(\Omega^c)^2_3 = \pm\frac{2}{r^2}(e^{13}+e^{24})
+
\frac{1}{r^2s^2}(e^{14}-e^{23}),\\
(\Omega^c)^1_5&=& (\Omega^c)^2_6 =\pm\frac{1}{r^2}(e^{16}-e^{25}),\\
(\Omega^c)^1_6&=& -(\Omega^c)^2_5 = \mp\frac{1}{r^2}(e^{15}+e^{26}),\\
(\Omega^c)^3_4&=& -\frac{2}{r^2s^2}(e^{12} -
s^4\,e^{56}),\\
(\Omega^c)^3_5&=& (\Omega^c)^4_6 =\mp\frac{1}{r^2}(e^{36}-e^{45}),\\
(\Omega^c)^3_6&=& -(\Omega^c)^4_5 =\pm\frac{1}{r^2}(e^{35}+e^{46}),\\
(\Omega^c)^5_6&=& -(\Omega^c)^1_2-(\Omega^c)^3_4.
\end{eqnarray*}

Hence, the first Pontrjagin class is represented by
$$
p_1(\nabla^c) =-\frac{2}{\pi^2 r^4} (e^{1234}+e^{1256}).
$$

Therefore, we have proved the following result for the structures
belonging to Family I whenever the metric coefficient $s^2\geq 1$:

\begin{thrm}\label{solutions-Family-I}
Let $A_{\lambda,\mu,\tau}$ be the {\rm SU(3)}-instanton given in
{\rm Proposition~\ref{instantons}} with
$\tau^2=\frac{s^4-1}{9r^2s^2}$. Then,
$$dT= 2\pi^2 \alpha'\,\left(p_1(\nabla^c)-p_1(A_{\lambda,\mu,\tau})\right),\quad\quad \alpha'=2r^2s^2>0.$$
Therefore, $(N, J_0^{\pm}, F, \Psi, A_{\lambda,\mu,\tau}, \nabla^c)$ is
a compact solution of the heterotic supersymmetry equations with
non-zero flux, non-flat instanton and constant dilaton with respect
to the Chern connection in the anomaly cancellation condition, for
$r\not=0$, $s^2\geq 1$, $\tau^2=\frac{s^4-1}{9r^2s^2}$ and for any
$\lambda,\mu$.
\end{thrm}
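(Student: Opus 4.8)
The plan is to reduce everything to the curvature and torsion data already assembled in this section, so that the proof becomes a verification of the anomaly identity followed by a checklist of the conditions (a)--(d) of the Strominger system.

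First I would substitute into the right-hand side $4\pi^2 r^2 s^2\bigl(p_1(\nabla^c)-p_1(A_{\lambda,\mu,\tau})\bigr)$ the three expressions already at hand: $dT=-\frac{8}{r^2}\bigl(\frac{1}{s^2}\,e^{1234}+s^2\,e^{1256}\bigr)$, the Chern Pontrjagin form $p_1(\nabla^c)=-\frac{2}{\pi^2 r^4}(e^{1234}+e^{1256})$, and $p_1(A_{\lambda,\mu,\tau})=-\frac{18\tau^2}{\pi^2 r^2 s^2}\,e^{1234}$ from Proposition~\ref{instantons}. The $e^{1256}$-components match identically; equating the $e^{1234}$-components forces $72\,\tau^2=\frac{8(s^4-1)}{r^2 s^2}$, i.e.\ exactly $\tau^2=\frac{s^4-1}{9r^2 s^2}$, which is a well-defined nonnegative real number precisely when $s^2\geq 1$, with $\tau\neq 0$ exactly when $s^2>1$. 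Writing $4\pi^2 r^2 s^2=2\pi^2\alpha'$ yields $\alpha'=2r^2 s^2>0$, so the physical positivity requirement in condition (d) comes for free.

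Next I would run through the remaining conditions. For (a) and (b): every metric in Family~I is balanced by Theorem~\ref{families-balanced-h19}, so \cite[Proposition~6.1]{FPS} gives that the holonomy of $\nabla^+$ reduces to a subgroup of SU(3), which is (a); choosing the dilaton $\phi$ constant makes $\theta=2d\phi=0$, and $\theta=0$ is equivalent to the balanced condition, which is (b); and any invariant $(3,0)$-form $\Psi$ is closed by \cite{S}, so $(J_0^\pm,F,\Psi)$ is an SU(3)-structure of the required type. Condition (c) is precisely Proposition~\ref{instantons}, valid for every $\lambda,\mu,\tau$. Condition (d) is the identity verified in the first step. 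Finally, the flux is non-zero since $dF\neq 0$ by \eqref{dF-torsion-h19-ii}, hence $T=J_0^\pm dF\neq 0$; the curvature $\Omega^{A_{\lambda,\mu,\tau}}$ computed in Proposition~\ref{instantons} is non-zero exactly when $\tau\neq 0$; and the whole construction is invariant, hence descends to the compact nilmanifold $N$ realized explicitly in Section~\ref{explicit}.

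I do not expect a genuine obstacle: the theorem amounts to the assertion that the three $4$-forms produced earlier in this section line up under one choice of $\tau$, and all the real work has been front-loaded into those curvature computations. The only point worth emphasizing is the range of parameters: the instanton is non-flat exactly for $s^2>1$, whereas at $s^2=1$ one has $\tau=0$ and one recovers a solution with flat instanton in the spirit of \cite{FIUV} (cf.\ Remark~\ref{deformation}).
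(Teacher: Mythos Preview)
Your proposal is correct and follows exactly the paper's approach: the paper does not give a separate proof but simply states the theorem after having computed $dT$, $p_1(\nabla^c)$ and $p_1(A_{\lambda,\mu,\tau})$, so that the result is just the substitution you carry out, together with the routine verification of conditions (a)--(d). Your additional remarks on $\alpha'=2r^2s^2>0$ and on the borderline case $s^2=1$ (flat instanton) are accurate and match Remark~\ref{deformation}.
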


\begin{rmrk}\label{deformation}{\rm
For $J_0^+$ and $r^2=s^2=1$ we get the particular solution given in
\cite{FIUV}. Notice that for that solution the instanton is flat, so
the solutions given in Theorem~\ref{solutions-Family-I} can be
thought as a deformation of that particular solution but with
instanton having curvature with non-zero trace.}
\end{rmrk}

\subsection{Solutions in Family II}\label{family-II}

Now we consider the Family II of balanced Hermitian SU(3)-structures
$(J_0^{\pm},F,\Psi)$ on $N$. The basis $\{e^i\}^6_{i=1}$ given in
\eqref{es-family-II} is adapted to the structure and from
(\ref{dF-torsion-h19-i}) the torsion~$T$~is
\begin{equation}\label{torsion-h19-i}\begin{array}{ll}
T=J_0^{\pm} dF=\frac{1}{r\,(p^2-q^2)} \Big[\!\!\!&\!\!\!
\pm\,(e^{234}-e^{256}) + 4 p
q(p^2+q^2)\,e^{236}\mp\frac{p}{q}\,(e^{136}+e^{246})\\[6pt]
\!\!\!&\!\!\! \pm\frac{q}{p}\,(e^{235}-e^{136}) - 4 p^2
q^2\,(e^{134}-e^{156}) - 4 p q\,(p^2\,e^{146} -
q^2\,e^{135})\Big].\end{array}
\end{equation}
From the structure equations \eqref{str-eq-Family-II} we get
$$dT=\frac{-2(p^2+q^2)}{r^2p^2q^2(p^2-q^2)^2} \Big[ (1+16p^2q^6)p^2 e^{1234}
+(1+16p^4q^4)pq (e^{1235}+e^{1246})+(1+16p^6q^2)q^2 e^{1256}
\Big].$$

From \eqref{torsion-chern-1-forms} and \eqref{dF-torsion-h19-i} it
follows that the non-zero torsion 1-forms $C^i_j$ of the Chern
connection $\nabla^c$ are
$$
\begin{array}{lll}
& C^1_3= \frac{1}{r(p^2-q^2)}(\pm\,e^3 \pm \frac{p}{q} e^6),\quad
& C^2_6= \frac{1}{r(p^2-q^2)}(4p q^3 e^3 - 4 p^2q^2 e^6), \\[6pt]
& C^1_4= \frac{1}{r(p^2-q^2)}(\pm\,e^4 \pm\frac{q}{p} e^5 +4p
q(p^2+q^2)e^6),\quad
& C^3_4= \frac{1}{r(p^2-q^2)}(4p^2 q^2 e^1 \mp e^2), \\[6pt]
& C^1_5= \frac{1}{r(p^2-q^2)}(4pq(p^2+q^2) e^3 \mp \frac{p}{q} e^4
\mp\, e^5),\quad
& C^3_5= \frac{1}{r(p^2-q^2)}(4p^3 q\, e^1 \pm\frac{p}{q} e^2), \\[6pt]
& C^1_6= \frac{1}{r(p^2-q^2)}(\mp \frac{q}{p} e^3 \mp\, e^6),\quad
&C^4_5= \frac{1}{r(p^2-q^2)}(\mp\frac{p^2+q^2}{pq} e^1 +4pq(p^2+q^2) e^2), \\[6pt]
& C^2_3= \frac{1}{r(p^2-q^2)}(4p^2 q^2 e^3 - 4p^3 q\, e^6),\quad
& C^4_6= \frac{1}{r(p^2-q^2)}(-4p q^3 e^1 \mp\frac{q}{p} e^2), \\[6pt]
& C^2_4= \frac{1}{r(p^2-q^2)}(4p^2 q^2 e^4 - 4pq^3 e^5
\pm\frac{p^2+q^2}{pq} e^6),\quad
& C^5_6= \frac{1}{r(p^2-q^2)}(-4p^2 q^2 e^1 \pm e^2). \\[6pt]
& C^2_5= \frac{1}{r(p^2-q^2)}(\pm \frac{p^2+q^2}{pq} e^3 + 4 p^3 q\,
e^4 -4p^2q^2 e^5), &
\end{array}
$$
and therefore, using \eqref{chern-1-forms} and the structure
equations \eqref{str-eq-Family-II}, the non-zero Chern connection
1-forms $(\sigma^c)^i_j$ are the following:
$$
\begin{array}{ll}
& (\sigma^c)^1_3=(\sigma^c)^2_4= \frac{1}{2r(p^2-q^2)}(\pm\,e^3+4
p^2q^2 e^4+4p^3 q\, e^5\mp\frac{q}{p}
e^6),\\[6pt]
& (\sigma^c)^1_4=-(\sigma^c)^2_3=\frac{1}{2r(p^2-q^2)}(4p^2q^2 e^3
\mp\, e^4 \mp \frac{q}{p} e^5 -4p^3 q\, e^6),\\[6pt]
& (\sigma^c)^1_5=(\sigma^c)^2_6= \frac{1}{2r(p^2-q^2)}(-4pq^3 e^3 \pm\frac{p}{q} e^4 \pm\, e^5 +4p^2 q^2 e^6),\\[6pt]
&
(\sigma^c)^1_6=-(\sigma^c)^2_5=\frac{1}{2r(p^2-q^2)}(\pm\frac{p}{q}
e^3 + 4pq^3 e^4 +4p^2q^2 e^5 \mp\, e^6),\\[6pt]
&(\sigma^c)^3_4=-(\sigma^c)^5_6=\pm\frac{1}{r(p^2-q^2)} e^2,\\[6pt]
&(\sigma^c)^3_5=(\sigma^c)^4_6=\mp\frac{1}{2rp q} e^2,\\[6pt]
&(\sigma^c)^3_6=-(\sigma^c)^4_5=\mp\frac{p^2+q^2}{2rpq(p^2-q^2)}
e^1.
\end{array}
$$

By \eqref{curvature} we get the curvature 2-forms $(\Omega^c)^i_j$
for the Chern connection $\nabla^c$:
\begin{eqnarray*}
(\Omega^c)^1_2  \!\!\!&=&\!\!\! \frac{-1}{r^2(p^2-q^2)^2} \, \Bigl[
\frac{(p^2+q^2)(1+16p^2q^6)}{2q^2}e^{34}+\frac{(p^2+q^2)(1+16p^6q^2)}{2p^2}e^{56}\\
& & +\frac{(p^2+q^2)(1+16p^4q^4)}{2pq}(e^{35}+e^{46}) \pm\,
4pq(p^2-q^2)(e^{36}-e^{45})
\Bigr],\\[6pt]
(\Omega^c)^1_3 \!\!\!&=&\!\!\! (\Omega^c)^2_4 =
\frac{-1}{r^2(p^2-q^2)^2} \, \Bigl[
\frac{p^2+q^2}{4q^2}(e^{13}+e^{24}) \pm\, q^2(3p^2-q^2)(e^{14}-e^{23})\\
& & \quad\quad\quad\pm\, p q(3p^2-q^2) (e^{15}+e^{26}) -
\frac{p^2+q^2}{4p q}(e^{16}-e^{25}) \Bigr],\\[6pt]
(\Omega^c)^1_4 \!\!\!&=&\!\!\! -(\Omega^c)^2_3 =
\frac{1}{r^2(p^2-q^2)^2} \, \Bigl[ \pm
(2p^4-3p^2q^2-q^4)(e^{13}+e^{24}) +
\frac{p^2+q^2}{4q^2}(e^{14}-e^{23})\\
& & \quad\quad\quad+ \frac{p^2+q^2}{4p q} (e^{15}+e^{26}) \mp\, p
q(p^2-3q^2)(e^{16}-e^{25}) \Bigr],\\[6pt]
(\Omega^c)^1_5 \!\!\!&=&\!\!\! (\Omega^c)^2_6 =
\frac{1}{r^2(p^2-q^2)^2} \, \Bigl[ \mp
p q(3p^2-q^2)(e^{13}+e^{24}) + \frac{p^2+q^2}{4 p q}(e^{14}-e^{23})\\
& & \quad\quad\quad+ \frac{p^2+q^2}{4 p^2} (e^{15}+e^{26}) \pm
(p^4+3p^2q^2-2q^4)(e^{16}-e^{25}) \Bigr],\\[6pt]
(\Omega^c)^1_6 \!\!\!&=&\!\!\! -(\Omega^c)^2_5 =
\frac{1}{r^2(p^2-q^2)^2} \, \Bigl[ \frac{p^2+q^2}{4p
q}(e^{13}+e^{24})
\mp\, p q(p^2-3q^2) (e^{14}-e^{23})\\
& & \quad\quad\quad\mp\, p^2(p^2-3q^2) (e^{15}+e^{26}) -
\frac{p^2+q^2}{4p^2}(e^{16}-e^{25}) \Bigr],
\end{eqnarray*}

\begin{eqnarray*}
(\Omega^c)^3_4 \!\!\!&=&\!\!\! \frac{-1}{r^2(p^2-q^2)^2} \, \Bigl[
\frac{p^4-q^4}{2p^2 q^2} e^{12}- \frac{1+16p^4q^4}{2}e^{34} -
\frac{q^2(1+16p^8)}{2p^2}e^{56}\\
& & -\frac{q(1+16p^6q^2)}{2p}(e^{35}+e^{46})
\mp 2p q(p^2-q^2)(e^{36}-e^{45})\Bigr],\\[6pt]
(\Omega^c)^3_5 \!\!\!&=&\!\!\! (\Omega^c)^4_6 =
\frac{-1}{r^2(p^2-q^2)^2} \, \Bigl[ \frac{p^2+q^2}{p q} e^{12}+
\frac{p(1+16p^2q^6)}{2q}e^{34} +
\frac{q(1+16p^6q^2)}{2p}e^{56}\\
& & \quad\quad\quad+\frac{1+16p^4q^4}{2}(e^{35}+e^{46})
\pm(p^4-q^4)(e^{36}-e^{45})\Bigr],\\[6pt]
(\Omega^c)^3_6 \!\!\!&=&\!\!\! -(\Omega^c)^4_5 = \frac{\pm
1}{r^2(p^2-q^2)} \, \Bigl[
2p q (e^{34}+e^{56}) + (p^2+q^2)(e^{35}+e^{46}) \Bigr],\\[6pt]
(\Omega^c)^5_6 \!\!\!&=&\!\!\! -(\Omega^c)^1_2-(\Omega^c)^3_4.
\end{eqnarray*}

Hence, the first Pontrjagin class of the Chern connection is
represented by
$$
p_1(\nabla^c) =-\frac{2(p^2+q^2)}{\pi^2
r^4(p^2-q^2)^2}\left((p^2+q^2) (e^{1234}+e^{1256}) +2p q
(e^{1235}+e^{1246}) \right).
$$

The following result gives explicit solutions with respect to the
Chern connection for the structures belonging to Family II whenever
the metric coefficients $s^2$ and $t^2$ are equal and greater
than~1.

\begin{thrm}\label{solutions-Family-II}
The connection $A_{\lambda,\mu,\tau}$ given in {\rm
Proposition~\ref{instantons}} is an {\rm SU(3)}-instanton for the
structures given in {\rm Family II} if and only if $\tau=0$. With
respect to such an instanton, there are solutions to the anomaly
cancellation condition if and only if $s^2=t^2$, in which case
$$dT=2\pi^2\alpha'\,\left(p_1(\nabla^c)-p_1(A_{\lambda,\mu,0})\right)=2\pi^2\alpha'\,p_1(\nabla^c),\quad\quad \alpha'=2r^2>0.$$
Therefore, $(N, J_0^{\pm}, F, \Psi, A_{\lambda,\mu,0}, \nabla^c)$ is a
compact solution of the heterotic supersymmetry equations with
non-zero flux and constant dilaton with respect to the Chern
connection in the anomaly cancellation condition, for $r\not= 0$,
$s^2=t^2> 1$ and for any $\lambda,\mu$.
\end{thrm}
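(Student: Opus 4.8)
The plan is to verify \textbf{Theorem~\ref{solutions-Family-II}} by a sequence of direct computations with the explicit data already assembled in Section~\ref{family-II}, checking each of the conditions (a)--(d) of the Strominger system in turn. Since $(J_0^{\pm},F)$ is balanced, condition (b) holds with constant dilaton ($\theta=0$) and, by \cite[Proposition~6.1]{FPS}, condition (a) is equivalent to it; moreover the invariant $(3,0)$-form $\Psi$ is closed by \cite{S}, so the SU(3)-structure is well defined. Thus the real content is the instanton condition (c) and the anomaly cancellation (d), for which we already have in hand: the torsion $T$ and $dT$, the curvature forms $(\Omega^c)^i_j$ and the resulting $p_1(\nabla^c)$, and the family $A_{\lambda,\mu,\tau}$ from Proposition~\ref{instantons} with its curvature.

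First I would determine when $A_{\lambda,\mu,\tau}$ is an SU(3)-instanton \emph{for the Family II adapted basis} \eqref{es-family-II}. The curvature forms $(\Omega^{A_{\lambda,\mu,\tau}})^i_j$ were computed in Proposition~\ref{instantons} to be proportional to $\tau(e^{13}+e^{24})$; but that computation used the structure equations \eqref{str-eq-Family-I} of Family~I, whereas now the ambient structure equations are \eqref{str-eq-Family-II}. So I would recompute $d\sigma^i_j$ using \eqref{str-eq-Family-II}, obtain the new curvature 2-forms via \eqref{curvature}, and then impose the instanton equations \eqref{inst}. The expectation — and the point of the ``if and only if $\tau=0$'' clause — is that for $\tau\neq 0$ the $(1,1)$-with-respect-to-$J_0^{\pm}$ condition or the primitivity condition in \eqref{inst} fails, while for $\tau=0$ the connection is flat (all $\sigma^i_j$ are closed left-invariant combinations of $e^1,e^2$ only, and $e^1,e^2$ are closed), hence trivially an instanton with $\Omega^{A_{\lambda,\mu,0}}=0$ and $p_1(A_{\lambda,\mu,0})=0$.

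With $\tau=0$ forced, condition (d) becomes $dT = c\,p_1(\nabla^c)$ for a positive constant $c=4r^2\pi^2\alpha'$ (up to the normalization chosen), i.e.\ one must check that the 4-form $dT$ computed above is a positive multiple of $p_1(\nabla^c)$ computed above. Comparing the two expressions, $dT$ has coefficients built from $(1+16p^2q^6)$, $(1+16p^4q^4)$, $(1+16p^6q^2)$ while $p_1(\nabla^c)$ has coefficients $(p^2+q^2)^2$ and $2pq(p^2+q^2)$; proportionality of the four coefficients $e^{1234},e^{1235}+e^{1246},e^{1256}$ forces a relation among $p,q$. I would translate this back through $p^2+q^2=s^2$, $p^2q^2=s^2/(4t^2)$ to show it is equivalent to $s^2=t^2$ (equivalently $p^2q^2=s^2/4$, i.e.\ $16p^2q^2=s^2$, which collapses $1+16p^4q^4=1+16p^6q^2=1+16p^2q^6$ — one should double-check this algebra carefully, as the exponents are not symmetric at first glance, but using $p^2+q^2=s^2$ together with $4p^2q^2=s^2/t^2$ the three bracketed quantities do become proportional to $p^2+q^2$, $2pq$, $p^2+q^2$ respectively precisely when $t^2=s^2$). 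Finally I would confirm that under $s^2=t^2>1$ the overall constant relating $dT$ to $p_1(\nabla^c)$ is genuinely positive (so that $\alpha'>0$ as physics requires), and collect the conclusion: $(N,J_0^{\pm},F,\Psi,A_{\lambda,\mu,0},\nabla^c)$ solves the system.

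The main obstacle is the last algebraic reconciliation: matching the non-symmetric-looking coefficients of $dT$ against those of $p_1(\nabla^c)$ and showing that proportionality of all components is \emph{exactly} the condition $s^2=t^2$ (and not something stronger or weaker), then pinning down the sign of the proportionality constant. Everything else — verifying (a),(b) via the cited results, and the ``$\tau=0$'' direction of the instanton condition — is routine given the machinery already set up in Sections~\ref{het-constant-dil} and~\ref{family-II}.
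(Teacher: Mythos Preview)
Your overall strategy coincides with the paper's: verify (a)--(b) via the balanced condition and \cite{FPS,S}, recompute the curvature of $A_{\lambda,\mu,\tau}$ using the Family~II structure equations \eqref{str-eq-Family-II} to see that the instanton condition \eqref{inst} forces $\tau=0$ (whence $\Omega^{A_{\lambda,\mu,0}}=0$ and $p_1(A_{\lambda,\mu,0})=0$), and then match $dT$ against $p_1(\nabla^c)$ componentwise. That is exactly what the paper does.

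However, the tentative algebra you sketch for the matching step is wrong on two points and would send you in the wrong direction. First, since $p^2q^2=\dfrac{s^2}{4t^2}$, the condition $s^2=t^2$ is equivalent to $p^2q^2=\dfrac{1}{4}$, not to $p^2q^2=\dfrac{s^2}{4}$. Second, at $p^2q^2=\dfrac{1}{4}$ the three brackets $1+16p^2q^6$, $1+16p^4q^4$, $1+16p^6q^2$ do \emph{not} become equal (they equal $1+4q^4$, $2$, $1+4p^4$ respectively); what equalizes are the \emph{ratios} of the $dT$-coefficients to the corresponding $p_1(\nabla^c)$-coefficients. The paper's cleaner route is: read off the proportionality constant from the $e^{1235}+e^{1246}$ component, namely $\alpha'=\dfrac{r^2(1+16p^4q^4)\pi^2}{2p^2q^2}$, and then impose consistency with the $e^{1234}$ and $e^{1256}$ components; subtracting these two consistency conditions yields $(p^2-q^2)(1-16p^4q^4)=0$, and since $p^2\neq q^2$ one gets $16p^4q^4=1$, i.e.\ $p^2q^2=\tfrac14$, i.e.\ $s^2=t^2$. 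Substituting back gives the positive constant $4r^2\pi^2$, so $\alpha'>0$ as required.
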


\begin{proof}
A direct calculation shows that the connection
$A_{\lambda,\mu,\tau}$ given in Proposition~\ref{instantons} is
again an SU(3)-connection, but it is an SU(3)-instanton for a
structure in Family II if and only if $\tau=0$, what implies that
the curvature vanishes identically. Now, comparing the coefficients
of $e^{1235}$ and $e^{1246}$ in $dT$ and $p_1(\nabla^c)$, we get
that if there is $\alpha'$ such that $dT=\alpha'\,p_1(\nabla^c)$
then necessarily $\alpha'=\frac{r^2(1+16p^4q^4)\pi^2}{2p^2q^2}$. But
in this case comparing the coefficients of $e^{1234}$ and $e^{1256}$
we get that $dT$ is a positive multiple of $p_1(\nabla^c)$ if and
only if $p^2-q^2=16p^4q^4(p^2-q^2)$. Since $p^2\not= q^2$ we
conclude that $p^2q^2=\frac{1}{4}$. Notice that this is equivalent
to the metric coefficients $s^2$ and $t^2$ to be equal, because
$p^2q^2=\frac{s^2}{4t^2}$.
\end{proof}

As a consequence of Propositions~\ref{str-eq-red-h19}
and~\ref{h19-h26}, and Theorems~\ref{families-balanced-h19},
\ref{solutions-Family-I} and~\ref{solutions-Family-II}, we conclude
the existence of many solutions to the Strominger system for any
invariant complex structure $J$ on $N$. In fact, such a $J$ is
isomorphic to $J_0^+$ or $J_0^-$, and for each one of these two
complex structures there is a four-parametric family of
(non-equivalent) balanced Hermitian structures providing solutions:
a two-parametric family corresponding to structures in Family I and
another two-parametric family corresponding to structures in Family
II.

\begin{cor}\label{generalresult}
Let $J$ be any invariant complex structure on $N$. Then, there are
{\rm SU(3)}-structures on the compact complex 6-manifold $(N,J)$
solving the heterotic supersymmetry equations with non-zero flux and
constant dilaton with respect to the Chern connection in the anomaly
cancellation condition.
\end{cor}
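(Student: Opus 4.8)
The plan is to assemble Corollary~\ref{generalresult} entirely from results already proved in the paper, so the ``proof'' is essentially a bookkeeping argument that tracks which complex structures and which metrics have been shown to yield solutions. First I would recall that by Proposition~\ref{str-eq-red-h19} together with Proposition~\ref{h19-h26}, every invariant non-nilpotent complex structure on the six-dimensional nilpotent Lie algebra underlying $N$ is equivalent to one of $J_0^+$ or $J_0^-$; moreover, since $N$ has underlying Lie algebra $\frn=(0,0,0,12,23,14-35)$ which (by Remark~\ref{real-clasification}) does not admit any nilpotent complex structure, \emph{every} invariant complex structure $J$ on $N$ is of this non-nilpotent type and hence is equivalent to $J_0^+$ or $J_0^-$. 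Thus it suffices to produce solutions for $J=J_0^{\pm}$.

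Next I would invoke Proposition~\ref{unique-balanced} to guarantee that each of $J_0^+$ and $J_0^-$ admits invariant balanced Hermitian metrics, and then Theorem~\ref{families-balanced-h19} to reduce any such balanced structure, up to equivalence, to a member of Family~I or Family~II. At this point the SU(3)-structure $(J,F,\Psi)$ is fixed, with $\Psi$ the invariant $(3,0)$-form, which is automatically closed. Since the dilaton is taken to be constant, the dilatino equation (b) amounts to the Lee form $\theta$ vanishing, i.e. to $(J,F)$ being balanced, which holds by construction; and by \cite[Proposition~6.1]{FPS} the balanced condition is equivalent to the holonomy of $\nabla^+$ reducing to a subgroup of SU(3), so the gravitino equation (a) holds as well. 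It remains to supply an instanton satisfying the gaugino equation (c) together with the anomaly cancellation condition (d) with respect to the Chern connection.

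For the last point I would simply cite the explicit constructions: for structures in Family~I, Theorem~\ref{solutions-Family-I} provides, for every $r\neq 0$ and every $s^2\geq 1$ (and arbitrary $\lambda,\mu$), the SU(3)-instanton $A_{\lambda,\mu,\tau}$ with $\tau^2=(s^4-1)/(9r^2s^2)$ satisfying $dT=4\pi^2 r^2 s^2\big(p_1(\nabla^c)-p_1(A_{\lambda,\mu,\tau})\big)$; and for structures in Family~II, Theorem~\ref{solutions-Family-II} provides, for every $r\neq 0$ and $s^2=t^2>1$, the flat instanton $A_{\lambda,\mu,0}$ with $dT=4\pi^2 r^2\,p_1(\nabla^c)$. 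In both cases the positivity constant $\alpha'>0$ is realized, so all four equations (a)--(d) hold. Since $(N,J)$ with $J$ equivalent to $J_0^{\pm}$ therefore carries at least one (in fact a four-parameter family, two parameters from Family~I and two from Family~II) of such SU(3)-structures solving the Strominger system, the corollary follows.

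The only genuinely substantive ingredient is the chain of equivalences identifying \emph{every} invariant complex structure on $N$ with $J_0^{\pm}$ — this is where one must be careful to cite Remark~\ref{real-clasification} (via \cite[Corollary~7]{U}) to rule out nilpotent complex structures on $\frn$, rather than merely classifying the non-nilpotent ones; all remaining steps are direct appeals to the already-established Theorems~\ref{solutions-Family-I} and~\ref{solutions-Family-II}, so there is no real computational obstacle here. The main ``obstacle'' is purely expository: making sure the statement is understood as an existence result for \emph{some} SU(3)-structure on $(N,J)$, not for every balanced metric, since Family~I requires $s^2\geq 1$ and Family~II requires $s^2=t^2>1$.
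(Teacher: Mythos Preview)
Your proposal is correct and follows essentially the same route as the paper, which deduces the corollary directly from Propositions~\ref{str-eq-red-h19} and~\ref{h19-h26} together with Theorems~\ref{families-balanced-h19}, \ref{solutions-Family-I} and~\ref{solutions-Family-II}. You are in fact more careful than the paper in explicitly invoking Remark~\ref{real-clasification} to exclude nilpotent complex structures on $\frn$, which the paper leaves implicit when asserting that any invariant $J$ on $N$ is isomorphic to $J_0^{\pm}$.
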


\begin{rmrk}\label{eq-motion}
{\rm It has been proved recently \cite{I} that the heterotic
supersymmetry and the anomaly cancellation imply the heterotic
equations of motion if and only if the connection on the tangent
bundle in the $\alpha'$ corrections is an SU(3)-instanton. From the
explicit expressions of the curvature forms $(\Omega^c)^i_j$ above,
one can see that the Chern connection is never an SU(3)-instanton.}
\end{rmrk}

\subsection{Explicit realizations}\label{explicit}

Let us consider the 6-dimensional Lie group
\begin{equation*}
K=\left\{\begin{pmatrix}1&\frac12 x_3&x_5&x_4&x_6\\
 0&1&2x_2&2x_1&x_1^2+x_2^2\\
 0&0&1&0&x_2\\
 0&0&0&1&x_1\\
 0&0&0&0&1\end{pmatrix}\,|\, x_i\in\mathbb{R}\right\}.
 \end{equation*}
The left translation $L$ by an element $(a_1,\ldots,a_6)\in K$ is
given by
$$
L(x_1,\ldots,x_6)= ( a_1+x_1, a_2+x_2, a_3+x_3, a_4+a_3x_1+x_4,
a_5+a_3x_2+x_5, a_6+a_5x_2+a_4x_1+\frac{1}{2}a_3(x_1^2+x_2^2)+x_6 ).
$$

The following vector fields constitute a basis of left-invariant
vector fields on the Lie group $K$:
$$
X_1=\frac{\partial}{\partial x_1} + x_3 \frac{\partial}{\partial
x_4} + x_4 \frac{\partial}{\partial x_6},\quad\quad
X_2=\frac{\partial}{\partial x_2} + x_3\frac{\partial}{\partial x_5}
+ x_5 \frac{\partial}{\partial x_6},\quad\quad
X_i=\frac{\partial}{\partial x_i}, \quad i=3,4,5,6.
$$
Therefore, the Lie algebra of $K$ is generated by elements
$X_1\ldots,X_6$ with non-zero brackets given by
$$[X_1,X_3]=-X_4,\quad [X_2,X_3]=-X_5,\quad
[X_1,X_4]=[X_2,X_5]=-X_6.$$

This Lie algebra is easily seen to be isomorphic to
$\frn=(0,0,0,12,23,14-35)$. In fact, with respect to the dual basis
of $\{X_1,\ldots,X_6\}$, which is given by
$$
\beta^1=dx_1,\ \beta^2=dx_2,\ \beta^3=dx_3,\ \beta^4=dx_4-x_3dx_1,\
\beta^5=dx_5-x_3dx_2,\ \beta^6=dx_6-x_4dx_1-x_5dx_2,$$ we have that
the structure equations of $K$ become
$d\beta^1=d\beta^2=d\beta^3=0$, $d\beta^4=\beta^{13}$,
$d\beta^5=\beta^{23}$, $d\beta^6=\beta^{14}+\beta^{25}$, and thus
the relations
$$e^1=\frac{r}{2s}\beta^1,\quad e^2=\frac{r}{2s}\beta^2,\quad e^3=\beta^4,\quad
e^4=\beta^5,\quad e^5=\beta^3,\quad e^6=\pm\frac{1}{s^2}\beta^6,$$
provide the explicit isomorphism between $\frn$ and the Lie algebra
of $K$.

Now, in terms of the coordinates $(x_1,\ldots,x_6)$ the balanced
Hermitian metrics of the Family I are:
\begin{eqnarray*}
g_{r,s} \!\!\!&=&\!\!\! \frac{r^2}{4s^2} ((dx_1)^2 + (dx_2)^2) +
(dx_3)^2 + (dx_4-x_3dx_1)^2 + (dx_5-x_3dx_2)^2 +
[\pm\frac{1}{s^2}(dx_6-x_4dx_1-x_5dx_2)]^2\\[7pt]
\!\!\!&=&\!\!\! \left( \frac{r^2}{4s^2} +(x_3)^2 +\frac{1}{s^4}
(x_4)^2 \right) (dx_1)^2 + \frac{2}{s^4}x_4x_5\, dx_1dx_2 - 2 x_3\,
dx_1dx_4 - \frac{2}{s^4}x_4\, dx_1dx_6\\[5pt]
&\!\!\!&\!\!\! + \left( \frac{r^2}{4s^2} +(x_3)^2 +\frac{1}{s^4}
(x_5)^2 \right)
(dx_2)^2 - 2 x_3\, dx_2dx_5 - \frac{2}{s^4}x_5\, dx_2dx_6\\[5pt]
&\!\!\!&\!\!\! +\, (dx_3)^2 + (dx_4)^2 + (dx_5)^2 + \frac{1}{s^4}
(dx_6)^2.
\end{eqnarray*}

We can express in a similar way the complex structures $J_0^{\pm}$, the
form $\Psi$ and the instantons $A_{\lambda,\mu,\tau}$ in terms of
the coordinates $(x_1,\ldots,x_6)$. Finally, notice that the
discrete subgroup $\Gamma$ can be taken as the subgroup of $K$
consisting of those matrices with integers entries, so that
$N=\Gamma\backslash K$.

\medskip
\noindent {\bf Acknowledgments.}
This work has been partially supported through Projects MICINN
(Spain) MTM2008-06540-C02-02 and MTM2011-28326-C02-01.

\end{document}